\titleformat*{\section}{\large\bfseries}
\titleformat*{\subsection}{\bfseries}
\newtheorem{theorem}{Theorem}[section]{\bfseries}{\it}
{\bfseries}{\it}
\newtheorem{lemma}[theorem]{Lemma}{\bfseries}{\it}
{\bfseries}{\it}
{\bfseries}{\it}
{\bfseries}{\rmfamily}
{\bfseries}{\it}
\theoremstyle{definition}
\newtheorem{remark}{Remark}[section]{\bfseries}{\rmfamily}
\newcommand{\proofbox}{\qed}
\numberwithin{equation}{section}
\newcommand{\calP}{\mathcal{P}}
\newcommand{\calT}{\mathcal{T}}
\newcommand{\ver}{{\bm{a}}}
\newcommand{\dd}{\mathrm{d}}
\newcommand{\sdim}{d}
\newcommand{\abs}[1]{\lvert#1\rvert}
\newcommand{\tends}{\rightarrow}
\newcommand{\norm}[1]{\lVert#1\rVert}
\newcommand{\p}{\partial}
\DeclareMathOperator{\Div}{div}
\DeclareMathOperator{\diam}{diam}
\DeclareMathOperator*{\argmin}{arg\,min}
\newcommand{\pair}[2]{\langle #1,#2 \rangle}
\newcommand{\R}{\mathbb{R}}
\newcommand{\RTNK}{\bm{RTN}_{\widetilde{p}}(K)}
\newcommand{\RTNa}{\bm{RTN}_{\widetilde{p}}(\Ta)}
\newcommand{\Ver}{\mathcal{V}}
\newcommand{\Ta}{\calT^{\ver}}
\newcommand{\oma}{{\omega_{\ver}}}
\newcommand{\psia}{\psi_{\ver}}
\newcommand{\Qa}{Q_h^{\ver}}
\newcommand{\Va}{\bm{V}_h^\ver}
\newcommand{\etaOscEth}{\eta_{\mathrm{osc},h,\tau,E}}
\newcommand{\wetaOscEth}{\widetilde{\eta}_{\mathrm{osc},h,\tau,E}}
\newcommand{\etaOscEthGlobal}{\eta_{\mathrm{osc},h,\tau,E,\star}}
\newcommand{\Bz}{B_{\bm{Z}}}
\newcommand{\normYs}[1]{\norm{#1}_{Y}}
\newcommand{\T}{\mathcal{T}}
\newcommand{\VTp}{V_{h}}
\newcommand{\uht}{u_{h,\tau}}
\newcommand{\Uht}{U_{h,\tau}}
\newcommand{\ouht}{\overline{u}_{h,\tau}}
\newcommand{\Vhtp}{\mathbb{V}_{h,\tau}^+}
\newcommand{\sht}{\bm{\sigma}_{h,\tau}}
\title{On the efficiency of a posteriori error estimators for  parabolic partial differential equations in the energy norm}
\author{Iain Smears\footnotemark[2]}
\begin{document}

\maketitle

\begin{abstract}
For the model problem of the heat equation discretized by an implicit Euler method in time and a conforming finite element method in space, we prove the efficiency of \emph{a posteriori} error estimators with respect to the energy norm of the error, when considering the numerical solution as the average between the usual continuous piecewise affine-in-time and piecewise constant-in-time reconstructions.
This illustrates how the efficiency of the estimators is not only possibly dependent on the choice of norm, but also on the choice of notion of numerical solution.
\end{abstract}

\renewcommand{\thefootnote}{\fnsymbol{footnote}}

\footnotetext[2]{Department of Mathematics, University College London, Gower
Street, WC1E 6BT London, United Kingdom (\texttt{i.smears@ucl.ac.uk}).}

\section{Introduction}

The numerical approximation of partial differential equations (PDE) is crucial in many scientific and engineering fields. 
Ensuring the accuracy of the approximate solutions requires robust error control by \emph{a posteriori} error estimators, which provide computable bounds on the discretization error without requiring knowledge of the exact solution.
For time-dependent problems, such as parabolic PDE, there is a wide range of challenges that are not typically encountered for their steady-state counterparts.
As a model problem, we consider the heat equation 
\begin{equation}\label{eq:parabolic}
\begin{aligned}
\p_t u - \Delta u = f & & & \text{in }\Omega\times(0,T),\\
 u = 0 & & &\text{on }\partial\Omega\times (0,T),\\
 u(0) = u_0 & & &\text{in }\Omega,
\end{aligned}
\end{equation}
where $\Omega$ is a bounded, open polytopal domain in $\R^\sdim$, $\sdim \geq 1$, with Lipschitz boundary $\partial\Omega$, and where $T>0$ is the final time. We assume in the following that $f\in L^2(0,T;H^{-1}(\Omega))$ and that $u_0\in L^2(\Omega)$.
We will consider below, for simplicity, an implicit Euler discretization in time with a conforming finite element approximation in space.
One challenge that appears in the context of time dependent problems is that there is a plethora of norms that can be considered to measure the error, and the analytical properties of error estimators may vary depending on the choice of norm.
Early works~\cite{ErikssonJohnson1987a,ErikssonJohnson1991,ErikssonJohnson1995,JohnsonNieThomee1990}, as well as some more recent ones~\cite{DemlowLakkisMak2009,LakkisMakPryer2015,Sutton2020}, treat various norms such as $L^\infty(L^2)$ and $L^\infty(L^\infty)$, see also~\cite{Verfurth1998b} regarding the $L^2(L^2)$ norm.
Verf\"urth~\cite{Verfurth1998a} and Picasso~\cite{Picasso1998} obtained some of the first results on the efficiency of the estimators in the context of the $L^2(H^1)$ norm, under various restrictions on the relationship between the relative sizes of elements of the spatial mesh and the time-steps.
In~\cite{Verfurth1998a}, the efficiency of the estimators was shown under a condition of the form $\tau\simeq h^2$, where $h$ is the spatial mesh size and $\tau$ is the time-step size.
In~\cite{Picasso1998}, it was shown that the spatial estimator can be bounded by the $L^2(H^1)$-norm of the error \emph{plus the temporal jump estimator}, under the condition $\tau\simeq h$.
However, it is not yet known if the additional term of the temporal jump estimator can be removed from the right-hand side of these bounds.
Later in~\cite{Verfurth2003}, it was shown that the same estimators of~\cite{Verfurth1998a} also provide, up to data oscillation terms, upper bounds on the $L^2(H^1)\cap H^1(H^{-1})$ norm of the error, and moreover that the estimators are then efficient, locally-in-time yet globally-in-space, without any restrictions between mesh and time-step sizes. 
Equilibrated flux estimators were also analysed for this functional setting in~\cite{ErnVohralik2010}, covering a wide range of spatial discretization schemes.
We also note that various alternative error quantifiers (also called error measures in the literature) have been considered, e.g.\ of energy norm type~\cite{AkrivisMakridakisNochetto2009,MakridakisNochetto2006,Schotzau2010}, for semi-discrete approximations; these quantifiers typically combine simultaneously several reconstructions of the numerical solution, which we shall discuss further below.
More recently, a different functional setting was developed in~\cite{ESV2017}, based on the extension of the $L^2(H^1)\cap H^1(H^{-1})$ norm to the nonconforming-in-time approximation space, and equilibrated flux estimators were developed that give guaranteed upper bounds on the error, as well as showing efficiency bounds that are local in both space and time.
Then, in~\cite{ESV2019}, it was shown that the equilibrated flux estimators are also efficient in the $L^2(H^1)$-norm plus the temporal jump estimator under the improved one-sided condition $h^2\lesssim \tau$, which is the practically relevant case for computations.

Overall, it is clear from these references that the choice of norm bears a significant influence on the question of efficiency of the estimators.
Yet, as was observed in~\cite{ESV2017}, the analysis of efficiency of the estimators also is influenced by another issue, which is rather more subtle and easily-overlooked, namely the matter of choice in the \emph{notion of numerical solution}.
Indeed, for many time-stepping methods such as the implicit Euler method, there is an element of choice in how to reconstruct a discrete function that extends the computed values at the time-step points to the whole space-time domain.
For instance, \cite{Picasso1998,Verfurth2003} consider the numerical solution as a continuous piecewise affine-in-time function that interpolates the values at the time-step points; see also generalizations to higher-order temporal discretizations in~\cite{MakridakisNochetto2006}, whereas \cite{ESV2017} considered the solution as generally discontinuous-in-time function, in particular a piecewise constant-in-time approximation in the lowest-order case of an implicit Euler discretization.
In this work, we aim to investigate further the intricate relationship between the estimators, norms and also possible reconstructions.
We shall consider in particular the energy norm of the error
\begin{equation}\label{eq:energy_identity}
\norm{u- \widetilde{u}_{h,\tau} }_E^2\coloneqq \frac{1}{2}\norm{u(T) - \widetilde{u}_{h,\tau}(T)}_\Omega^2 + \int_0^T \norm{\nabla (u-\widetilde{u}_{h,\tau})}_\Omega^2\dd t,
\end{equation}
where $\norm{\cdot}_\Omega$ denotes the usual norm on $L^2(\Omega)$ or $L^2(\Omega;\R^\sdim)$, depending on the argument, and where the numerical approximation~$\widetilde{u}_{h,\tau}$ is some chosen reconstruction with regards to the temporal approximation.
To motivate the ideas in this work, let us examine more closely the role of the choice of reconstruction by considering momentarily the semi-discrete setting.

\paragraph{Role of the reconstruction in the semi-discrete setting.}
Let us consider, in a first instance, the semi-discrete approximation of~\eqref{eq:parabolic}, using an implicit Euler discretization in time, but without spatial discretization.
Let $\{t_n\}_{n=0}^N$, for some integer $N\geq 1$, denote a strictly increasing sequence of time-step points, with $t_0 = 0$ and $t_N = T$.
For each $n\in \{1,\dots,N\}$, let $I_n \coloneqq (t_{n-1},t_n)$ denote the corresponding time-step interval, and let $\tau_n = t_n - t_{n-1} >0$ denote the corresponding step length.
For the sake of simplicity in this introduction, let us momentarily assume that the source term $f$ is piecewise constant-in-time with respect to the time-step partition and that the initial datum $u_0\in H^1_0(\Omega)$.
For each $n\in\{1,\dots,N\}$, let $u_n \in H^1_0(\Omega)$ be defined inductively by 
\begin{equation}\label{eq:IE_as_FD}
\left( \frac{u_{n}-u_{n-1}}{\tau_n} , v \right)_{\Omega} + (\nabla u_n, \nabla v )_\Omega = \pair{f_n}{v} \quad \forall v \in H^1_0(\Omega),
\end{equation}
where $u_0$ is the initial datum from~\eqref{eq:parabolic}, where $f_n\coloneqq f|_{I_n}$, and where $(\cdot,\cdot)_\Omega$ denotes the usual $L^2$ inner-product for either scalar or vector fields, depending on the arguments.
Let $u_\tau\colon [0,T]\rightarrow H^1_0(\Omega)$ denote the unique left-continuous piecewise constant-in-time function given by $u_\tau|_{I_n}=u_n$ for all $n\in\{1,\dots,N\}$ as well as $u_\tau(0)=u_0$. 
Let $U_\tau\colon [0,T]\rightarrow H^1_0(\Omega)$ denote the continuous piecewise affine-in-time function that interpolates the $\{u_n\}_{n=0}^N$ at the time-step nodes. 
It is straightforward to check that $u_{\tau}(T)=U_{\tau}(T)=u_N$, and that $(\p_t U_\tau(t),v)_\Omega + (\nabla u_\tau(t),\nabla v)_\Omega = \pair{f(t)}{v}$ for all $v\in H^1_0(\Omega)$, where $\pair{\cdot}{\cdot}$ denotes the duality pairing between $H^1_0(\Omega)$ and its dual $H^{-1}(\Omega)$. 
This identity implies that
\begin{equation}\label{eq:semidisc_1}
  \pair{\p_t (u-U_\tau)(t)}{v}+(\nabla(u-u_\tau)(t),\nabla v)=0 \quad \forall v\in H^1_0(\Omega),
  \end{equation}
for a.e.\ $t\in (0,T)$. 
It is then found that the errors $u-u_\tau$ and $u-U_\tau$ are orthogonal in the inner-product~$(\cdot,\cdot)_E$ that relates to the energy norm, since
\begin{equation}\label{eq:orthogonality}
\begin{split}
(u-u_\tau,u-U_\tau)_E &\coloneqq
  \frac{1}{2}\left(u-u_{\tau}(T),u-U_{\tau}(T)\right)_\Omega+\int_0^T \left(\nabla(u-u_\tau),\nabla(u-U_\tau)\right)_{\Omega} \mathrm{d}t
  \\ &= \frac{1}{2}\norm{(u-U_\tau)(T)}_\Omega^2 - \int_0^T \pair{\p_t (u-U_\tau)}{u-U_\tau}\mathrm{d}t
  \\ & =0,
\end{split}
  \end{equation}
where in passing to the second line above we used $u_\tau(T)=U_\tau(T)$ and \eqref{eq:semidisc_1}, and in passing to the last line we used $u(0)=U_\tau(0)=u_0$. 
The orthogonality of the errors $u-u_\tau$ and $u-U_\tau$ implies the Pythagoras identity
\begin{equation}\label{eq:hypercircle}
\norm{u-u_\tau}_E^2 + \norm{u-U_\tau}_E^2 = \norm{u_\tau-U_\tau}_E^2,
\end{equation}
where, again using the fact that $u_\tau(T)=U_\tau(T)$, it is found that
\begin{equation}
\norm{u_\tau-U_\tau}_E^2= \int_0^T \norm{\nabla(u_\tau-U_\tau)}_\Omega^2\mathrm{d}t = \frac{1}{3}\sum_{n=1}^N \tau_n\norm{\nabla(u_n-u_{n-1})}_\Omega^2.
\end{equation}
Thus the estimator $\norm{u_\tau-U_\tau}_E$ is the well-known temporal jump estimator that appears in numerous analyses of \emph{a posteriori} analysis of parabolic problems.
The identity~\eqref{eq:hypercircle} is a parabolic analogue of the celebrated Prager--Synge identity~\cite{PragerSynge1947}, which plays a central role in the analysis of elliptic problems. 
In particular, it implies immediately the bounds $\norm{u-u_\tau}_E \leq \norm{u_\tau-U_\tau}_E$ and $\norm{u-U_\tau}_E\leq \norm{u_\tau-U_\tau}_E$.
However, the problem is that, in general, the temporal jump estimator $\norm{u_\tau-U_\tau}_E$ is not efficient with respect to either $\norm{u-u_\tau}_E$ or $\norm{u-U_\tau}_E$.
For the sake of completeness, we include a short example on this issue in Appendix~\ref{sec:counterexamples} below.
Yet, following Prager and Synge~\cite{PragerSynge1947}, it interesting to observe that~\eqref{eq:hypercircle} is equivalent to
\begin{equation}\label{eq:hypercircle_2}
  \norm{u-\overline{u}_\tau}_E = \frac{1}{2}\norm{u_\tau-U_\tau}_E, \qquad \overline{u}_\tau \coloneqq \frac{1}{2}(u_\tau+U_\tau),
\end{equation}
 which expresses the fact that, in the geometry determined by the energy norm, the three functions $u$, $u_\tau$, $U_\tau$ lie on a hypercircle of radius $\frac{1}{2}\norm{u_\tau-U_\tau}_E$ that is centred on $\overline{u}_\tau$, the midpoint between $u_\tau$ and $U_\tau$.
Note that considering the centre of the hypercircle as the appropriate notion of numerical solution rejoins an idea that was already proposed by Prager and Synge~\cite[p.~248]{PragerSynge1947}. 
As we show below, it turns out that this third option offers an effective generalization to the fully discrete setting.

\paragraph{Contributions.} 

We show that~\eqref{eq:hypercircle_2} can be generalized appropriately to the fully discrete setting, leading to upper and lower bounds on the error when considering the numerical solution as the midpoint between the two standard reconstructions. To be clear, we are not suggesting that this notion of numerical solution needs to be used in practical computations over the alternatives, but rather that it helps to better understand the behaviour of the estimators.

Our main results are as follows. 
For fully discrete approximation based on an implicit Euler method in time and conforming finite element method in space, resulting in a piecewise constant-in-time approximation $\uht$ and continuous piecewise affine approximation $\Uht$, we consider error bounds for $u-\ouht$ where $\ouht = \frac{1}{2}(\uht+\Uht)$.
We show, in Theorem~\ref{thm:energy_main_bound} below, an upper bound of the form
\begin{equation}
\norm{u-\ouht}_E^2 \leq \int_0^T \left(\frac{1}{4}\norm{\nabla(\uht-\Uht)}_\Omega^2 + \norm{\sht+\nabla \ouht}_{\Omega}^2\right)\mathrm{d}t + \mathrm{oscillation},
\end{equation}
where $\sht\in L^2(0,T;H(\Div,\Omega))$ is a locally computable equilibrated flux that is constructed in a similar manner as in~\cite{ESV2017,ESV2019}.
Furthermore, in Theorem~\ref{thm:lower_bound} below, we obtain a global lower bound of the form
\begin{equation}
\int_0^T \left(\frac{1}{4}\norm{\nabla(\uht-\Uht)}_\Omega^2 + \norm{\sht+\nabla \ouht}_{\Omega}^2\right)\mathrm{d}t \lesssim \norm{u-\ouht}_E^2 + \mathrm{oscillation},
\end{equation}
where the hidden constant is independent of the discretization and problem parameters, under two additional hypotheses which we discuss more below. However, a crucial point that we wish to clarify immediately is that the efficiency of the estimators does not derive from the fact that one is considering the term $\sht+\nabla\ouht$ in the flux estimator instead of either $\sht+\nabla \uht$ or $\sht+\nabla\Uht$ since all of these would result in equivalent total estimators by virtue of the triangle inequality, so efficiency also holds for all of these possible choices.
The additional hypotheses used to show the efficiency bound include the assumption that the $L^2$-projection operator into the finite element space is $H^1$-stable, and also a one-sided condition that the local mesh-size $h^2\lesssim \tau$ the time-step size.
Note that this latter condition enables large time-steps and is relevant to practical computations. We remark also that the $H^1$-stability of the $L^2$-projection is known to play an important role in numerical methods for parabolic problems~\cite{TantardiniVeeser2016}, and also that this assumption is verified for a range of mesh refinements in practice~\cite{GaspozSiebert2016}.
As a significant original ingredient for the analysis,  we derive a framework that characterizes the energy norm in terms of an inf-sup identity for a suitable weak form of the heat equation. 
This allows us to overcome various difficulties that would otherwise appear when trying the more usual approach to energy norm bounds, which is based on testing the residual with the error.

For the sake of simplicity, we do not go into detail into possible extensions of these results to other settings. First, we focus here only on the case of equilibrated flux estimators, since we make use of results from~\cite{ESV2019}. 
However, other spatial error estimators could most likely also be considered.
One can also consider other time-stepping methods, such as higher-order approximations in time, following the ideas in~\cite{ESV2017,ESV2019}. Yet another further extension involves mesh-modification between time-steps, as analysed in the previous works~\cite{ESV2017,ESV2019}, since this plays an important role in adaptive methods~\cite{ChenFeng2004,Dupont1982,ErikssonJohnson1991,GaspozSiebertKreuzerZiegler2019,Kreuzer2012}.

\section{Notation and setting}

Let $H^1_0(\omega)$ denote the closure of $C^\infty_0(\omega)$ in the space $H^1(\omega)$, where $C^\infty_0(\omega)$ denotes the space of real-valued infinitely differentiable compactly supported functions on $\omega$.
Note that for $\omega$ bounded, the Poincar\'e inequality implies that the mapping $v\mapsto \norm{\nabla v}_{\omega}$ defines an equivalent norm on $H^1_0(\omega)$, see \cite[Corollary 6.31, p.~184]{AdamsFournier2003}. 
Let $H^{-1}(\omega)$ denote the dual space of $H^1_0(\omega)$, with norm
\begin{equation}\label{eq:dual_norm}
\norm{\Phi}_{H^{-1}(\omega)}\coloneqq \sup_{v\in H^1_0(\omega)\setminus\{0\}}\frac{\pair{\Phi}{v}_{H^{-1}(\omega)\times H^1_0(\omega)}}{\norm{\nabla v}_\omega} \quad \forall \Phi\in H^{-1}(\omega),
\end{equation}
where $\pair{\cdot}{\cdot}_{H^{-1}(\omega)\times H^1_0(\omega)}$ is the duality pairing between $H^{-1}(\omega)$ and $H^1_0(\omega)$.
As noted above, in the case $\omega=\Omega$, we drop the subscript and denote the duality pairing simply by $\pair{\cdot}{\cdot}$.
The space $L^2(\omega)$ can be canonically embedded into $H^{-1}(\omega)$ through $\pair{w}{ v}_{H^{-1}(\omega)\times H^1_0(\omega)} = (w,v)_{\omega}$ for all $v\in H^1_0(\omega)$. 
Thus the spaces $H^1_0(\omega)$, $L^2(\omega)$ and $H^{-1}(\omega)$ form a \emph{Gelfand triple}, with $H^1_0(\omega)\subset L^2(\omega)\subset H^{-1}(\omega)$, where each embedding is continuous, dense and injective.

The analysis will be formulated in terms of the following function spaces. First, let
\begin{equation}\label{eq:XY_spaces_def_1}
\begin{aligned}
  X & \coloneqq L^2(0,T;H^1_0(\Omega)).
\end{aligned}
\end{equation}
Since $\Omega$ is bounded, we shall equip $X$ with the norm $\norm{\cdot}_X$ defined by
\begin{equation}\label{eq:X_norm}
\norm{v}_X^2 \coloneqq \int_0^T \norm{\nabla v}_\Omega^2 \mathrm{d}t \quad\forall v \in X.
\end{equation}
Next, let 
\begin{equation}\label{eq:Y_def}
Y \coloneqq L^2(0,T;H^1_0(\Omega))\cap H^1(0,T;H^{-1}(\Omega)).
\end{equation}
It is known that $Y$ is continuously embedded in $C([0,T];L^2(\Omega))$, see~\cite[p.~287]{Evans1998}.
Thus, we may consider the norm $\normYs{\cdot}$ on the space $Y$ defined by
\begin{equation}\label{eq:Ys_def}
\begin{aligned}
\normYs{\varphi}  \coloneqq \int_0^T\left(
\norm{\partial_t \varphi}_{H^{-1}(\Omega)}^2+\norm{\nabla\varphi}_\Omega^2\right)\dd t + \norm{\varphi(0)}_\Omega^2+\norm{\varphi(T)}_\Omega^2  &&& \forall\varphi \in Y,
\end{aligned}
\end{equation}
We remark that $\normYs{\cdot}$ is then invariant with respect to reversal of the time variable, i.e.\ the map $Y\ni\varphi\mapsto \varphi(T-\cdot)$ is an isometry under the norm $\normYs{\cdot}$. 
We also let $\bm{Z}\coloneqq X\times L^2(\Omega) $ denote the product of space of $X$ with $L^2(\Omega)$.
A norm on the space $\bm{Z}$ is defined by
\begin{equation}\label{eq:Z_norm}
\begin{aligned}
\norm{\bm{v}}_{\bm{Z}}^2 &\coloneqq \norm{v}_X^2+\frac{1}{2}\norm{v_T}_\Omega^2  && \forall\bm{v}=(v,v_T)\in \bm{Z}.
\end{aligned}
\end{equation}
Note that the choice of a factor of $\frac{1}{2}$ before the term $\norm{v_T}_\Omega^2$ in~\eqref{eq:Z_norm} is motivated by the energy norm.

\paragraph{Inf-sup stability and connection to the energy norm.}

The following Lemma gives a useful alternative formula for the norm~$\normYs{\cdot}$ that was defined in~\eqref{eq:Ys_def} above. See also~\cite{UrbanPatera2012,ESV2017} for similar identities.
\begin{lemma}\label{lem:Ys_identity}
We have the identity
\begin{equation}\label{eq:Ys_norm_identity}
\normYs{\varphi}^2 = 2\norm{\varphi(T)}_\Omega^2 + \int_0^T\norm{(\p_t + \Delta) \varphi}_{H^{-1}(\Omega)}^2 \dd t \quad \forall \varphi \in Y.
\end{equation}
\end{lemma}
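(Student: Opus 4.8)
The plan is to exploit the fact that $-\Delta$ realizes the Riesz isomorphism between $H^1_0(\Omega)$ and $H^{-1}(\Omega)$. Concretely, since $H^{-1}(\Omega)$ is the dual of a Hilbert space, the dual norm $\norm{\cdot}_{H^{-1}(\Omega)}$ is itself a Hilbert-space norm, induced by the inner product $\pair{\Phi}{\Psi}_{-1}\coloneqq \pair{\Phi}{(-\Delta)^{-1}\Psi}$, where $(-\Delta)^{-1}\colon H^{-1}(\Omega)\to H^1_0(\Omega)$ is the associated Riesz map; in particular $\norm{\Delta v}_{H^{-1}(\Omega)}=\norm{\nabla v}_\Omega$ for every $v\in H^1_0(\Omega)$. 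Fixing a time and expanding the square, I would write, for a.e.\ $t\in(0,T)$,
\begin{equation*}
\norm{(\p_t+\Delta)\varphi}_{H^{-1}(\Omega)}^2 = \norm{\p_t\varphi}_{H^{-1}(\Omega)}^2 + 2\pair{\p_t\varphi}{\Delta\varphi}_{-1} + \norm{\nabla\varphi}_\Omega^2 .
\end{equation*}
The only nontrivial term is the cross term: since $(-\Delta)^{-1}(\Delta\varphi)=-\varphi$, it equals $-\pair{\p_t\varphi}{\varphi}$, the duality pairing between $\p_t\varphi(t)\in H^{-1}(\Omega)$ and $\varphi(t)\in H^1_0(\Omega)$.

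Next I would integrate this identity over $(0,T)$ and invoke the standard integration-by-parts rule on the Gelfand triple, valid for $\varphi\in Y$ (see~\cite{Evans1998}): the map $t\mapsto\norm{\varphi(t)}_\Omega^2$ is absolutely continuous with $\frac{\dd}{\dd t}\norm{\varphi(t)}_\Omega^2 = 2\pair{\p_t\varphi(t)}{\varphi(t)}$ for a.e.\ $t$, so that $\int_0^T\pair{\p_t\varphi}{\varphi}\dd t = \frac12\bigl(\norm{\varphi(T)}_\Omega^2-\norm{\varphi(0)}_\Omega^2\bigr)$. Substituting yields
\begin{equation*}
\int_0^T\norm{(\p_t+\Delta)\varphi}_{H^{-1}(\Omega)}^2\dd t = \int_0^T\left(\norm{\p_t\varphi}_{H^{-1}(\Omega)}^2+\norm{\nabla\varphi}_\Omega^2\right)\dd t -\norm{\varphi(T)}_\Omega^2+\norm{\varphi(0)}_\Omega^2 ,
\end{equation*}
and adding $2\norm{\varphi(T)}_\Omega^2$ to both sides reproduces exactly the definition~\eqref{eq:Ys_def} of $\normYs{\varphi}^2$, which is the claimed identity~\eqref{eq:Ys_norm_identity}.

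The computation is essentially routine, and I do not anticipate a genuine obstacle; the two points that warrant care are (i) the clean identification of the $H^{-1}(\Omega)$ inner product through the inverse Laplacian, so that the cross term collapses to a duality pairing in the form to which the Gelfand-triple calculus applies, and (ii) checking that every term is finite and that the integration-by-parts formula is licit, both of which follow from $\varphi\in Y$ together with the continuous embedding $Y\hookrightarrow C([0,T];L^2(\Omega))$ recalled above.
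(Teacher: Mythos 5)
Your proof is correct and is essentially the paper's argument: the paper introduces $z=(-\Delta)^{-1}\partial_t\varphi$ explicitly and expands $\norm{\nabla(z-\varphi)}_\Omega^2$, which is exactly your expansion of the square in the $H^{-1}(\Omega)$ inner product via the Riesz map, followed by the same Gelfand-triple integration by parts. No substantive difference.
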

Note that in~\eqref{eq:Ys_norm_identity}, $\Delta \colon H^1_0(\Omega)\rightarrow H^{-1}(\Omega)$ denotes the Laplacian operator understood in a distributional sense.

\begin{proof}
Let $\varphi\in Y$ be arbitrary.
Let $z\in X$ be the unique function that satisfies $(\nabla z(t),\nabla v)_\Omega = \pair{\p_t \varphi(t)}{v}$ for all $v\in H^1_0(\Omega)$ and a.e.\ $t\in (0,T)$. 
It follows that $\norm{\nabla z}_\Omega^2 = \norm{\p_t \varphi}_{H^{-1}(\Omega)}^2$ and $\norm{\p_t\varphi + \Delta \varphi}_{H^{-1}(\Omega)}^2=\norm{\nabla(z-\varphi)}^2_\Omega$ a.e.\ in $(0,T)$.
So, by expanding the square, we get
\begin{equation}
\begin{split}
\int_0^T \left(\norm{\p_t\varphi + \Delta \varphi}_{H^{-1}(\Omega)}^2\right)\mathrm{d}t
 &= \int_0^T \left( \norm{\nabla z}_\Omega^2 - 2(\nabla z,\nabla \varphi)_\Omega + \norm{\nabla \varphi}_\Omega^2 \right)\mathrm{d}t 
\\ & = \int_0^T \left( \norm{\nabla z}_\Omega^2 - 2\pair{\p_t \varphi}{ \varphi} + \norm{\nabla \varphi}_\Omega^2\right) \mathrm{d}t 
\\ &= \int_0^T \left(\norm{\p_t \varphi }_{H^{-1}(\Omega)}^2 + \norm{\nabla \varphi}_\Omega^2\right) \mathrm{d}t + \norm{\varphi(0)}_\Omega^2 - \norm{\varphi(T)}_\Omega^2
\\ &= \normYs{\varphi}^2 - 2 \norm{\varphi(T)}_\Omega^2,
\end{split}
\end{equation}
where in the penultimate line we used the identity $\int_0^T 2\pair{\p_t \varphi}{\varphi}\mathrm{d}t = \norm{\varphi(T)}_\Omega^2-\norm{\varphi(0)}_\Omega^2 $.
\end{proof}

Define the bilinear form $\Bz:\bm{Z}\times Y\rightarrow \R$ by
\begin{equation}\label{eq:B_bilinear}
\begin{aligned}
\Bz(\bm{v},\varphi)\coloneqq (v_T,\varphi(T))_\Omega+ \int_0^T\left(-\pair{\partial_t \varphi}{v} + (\nabla \varphi, \nabla v )_\Omega \right) \mathrm{d}t  ,
\end{aligned}
\end{equation}
for all $\bm{v}=(v,v_T)\in \bm{Z}$ and all $\varphi\in Y$.

The following theorem states the inf-sup identities for the bilinear form $\Bz$.
\begin{theorem}[Inf-sup identities]\label{thm:Z_infsup}
We have the identities
\begin{subequations}\label{eq:infsup}
\begin{align}
\sup_{\varphi\in Y\setminus\{0\}}\frac{\Bz(\bm{v},\varphi)}{\normYs{\varphi}} &= \norm{\bm{v}}_{\bm{Z}} \quad\forall \bm{v}\in \bm{Z}, \label{eq:Z_infsup_1} \\
 \sup_{\bm{v}\in \bm{Z}\setminus\{0\}}\frac{\Bz(\bm{v},\varphi)}{\norm{\bm{v}}_{\bm{Z}}} &= \normYs{\varphi} \quad \forall \varphi\in Y. \label{eq:Z_infsup_2}
\end{align}
\end{subequations}
\end{theorem}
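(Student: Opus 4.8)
The plan is to prove the two identities \eqref{eq:Z_infsup_1}--\eqref{eq:Z_infsup_2} by exhibiting, for each fixed argument, an explicit maximizer (or maximizing sequence) in the other variable, combined with a Cauchy--Schwarz type upper bound. The key structural fact I would exploit is Lemma~\ref{lem:Ys_identity}, which rewrites $\normYs{\varphi}^2 = 2\norm{\varphi(T)}_\Omega^2 + \int_0^T \norm{(\p_t+\Delta)\varphi}_{H^{-1}(\Omega)}^2\dd t$, and the analogous split of $\norm{\bm v}_{\bm Z}^2 = \norm{v}_X^2 + \tfrac12\norm{v_T}_\Omega^2$. The bilinear form $\Bz(\bm v,\varphi) = (v_T,\varphi(T))_\Omega + \int_0^T(-\pair{\p_t\varphi}{v} + (\nabla\varphi,\nabla v)_\Omega)\dd t$ pairs $v_T$ against $\varphi(T)$ and $v$ against $-(\p_t+\Delta)\varphi$ (the spatial term $(\nabla\varphi,\nabla v)_\Omega = -\pair{\Delta\varphi}{v}$), so after introducing the Riesz representative $z$ of $\p_t\varphi + \Delta\varphi$ in $H^1_0(\Omega)$ as in the proof of Lemma~\ref{lem:Ys_identity}, we get the clean expression $\Bz(\bm v,\varphi) = (v_T,\varphi(T))_\Omega - \int_0^T(\nabla v,\nabla z)_\Omega\dd t$.

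For \eqref{eq:Z_infsup_1}, fix $\bm v = (v,v_T)\in\bm Z$. The upper bound $\Bz(\bm v,\varphi)\le\norm{\bm v}_{\bm Z}\normYs{\varphi}$ follows by Cauchy--Schwarz on each of the two terms above, with the weights $\tfrac12$ and $2$ distributing as $(v_T,\varphi(T))_\Omega \le \tfrac1{\sqrt2}\norm{v_T}_\Omega\cdot\sqrt2\,\norm{\varphi(T)}_\Omega$ and $\int_0^T(\nabla v,\nabla z)_\Omega \le \norm{v}_X\,(\int_0^T\norm{\nabla z}_\Omega^2)^{1/2}$, then another Cauchy--Schwarz in $\R^2$. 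For the reverse inequality I need to construct $\varphi\in Y$ achieving equality (up to $\varepsilon$): I want $\varphi(T) = c\,v_T$ for an appropriate scalar $c>0$ and $(\p_t+\Delta)\varphi$ to have Riesz representative $z$ pointing opposite to $v$, i.e. $z = -c' v$ for a scalar $c'>0$. Concretely, solve the terminal-value backward parabolic problem $\p_t\varphi + \Delta\varphi = -c' (-\Delta)v =: g$ with $\varphi(T) = c\,v_T$; since this is a backward heat equation with $L^2(H^{-1})$ right-hand side and $L^2(\Omega)$ terminal data, it has a unique solution $\varphi\in Y$ by the standard theory (equivalently, reverse time and apply the forward well-posedness result underlying the embedding $Y\hookrightarrow C([0,T];L^2(\Omega))$). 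Choosing the constants $c,c'$ to match the ratio dictated by the $\R^2$ Cauchy--Schwarz equality case then yields $\Bz(\bm v,\varphi) = \norm{\bm v}_{\bm Z}\normYs{\varphi}$. One caveat: if $v_T=0$ and $v=0$ both sides are zero and there is nothing to prove; otherwise at least one is nonzero and the constructed $\varphi$ is nonzero.

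For \eqref{eq:Z_infsup_2}, fix $\varphi\in Y$. The upper bound is the same Cauchy--Schwarz argument read in the other direction. For the lower bound I simply reverse the construction: given $\varphi$, set $v_T := \varphi(T)$ (up to the $\tfrac12$-weight normalization) and $v := -z$ where $z\in X$ is the Riesz representative of $\p_t\varphi+\Delta\varphi$ already available from Lemma~\ref{lem:Ys_identity}; then $\bm v = (v,v_T)\in\bm Z$ is explicit — no PDE solve is needed in this direction — and a direct computation using $\int_0^T\norm{\nabla z}_\Omega^2 = \int_0^T\norm{(\p_t+\Delta)\varphi}_{H^{-1}}^2 = \normYs{\varphi}^2 - 2\norm{\varphi(T)}_\Omega^2$ gives $\Bz(\bm v,\varphi) = \norm{\bm v}_{\bm Z}\normYs{\varphi}$ after rescaling. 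I expect the main obstacle to be the first direction: verifying that the backward parabolic problem with the prescribed terminal data and $H^{-1}$-valued source indeed has a solution in $Y$ (not merely in a weaker space), and that the resulting $\varphi$ genuinely attains the supremum rather than only approaching it. This is where the time-reversal invariance of $\normYs{\cdot}$ noted after \eqref{eq:Ys_def}, together with the well-posedness theory behind the embedding $Y\hookrightarrow C([0,T];L^2(\Omega))$ cited from~\cite{Evans1998}, does the heavy lifting; everything else is bookkeeping with the two weighted Cauchy--Schwarz inequalities and their equality cases.
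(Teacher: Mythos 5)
Your proposal is correct and follows essentially the same route as the paper: a weighted Cauchy--Schwarz upper bound built on Lemma~\ref{lem:Ys_identity}, then exact optimizers given by a backward parabolic solve $\p_t\varphi_*+\Delta\varphi_*=\Delta v$, $\varphi_*(T)=\frac{1}{2}v_T$ for \eqref{eq:Z_infsup_1} (your constants $c=\frac12$, $c'=1$ satisfy the ratio condition $2c=c'$ you identify) and the explicit choice $v=(-\Delta)^{-1}(-\p_t\varphi)+\varphi$, $v_T=2\varphi(T)$ for \eqref{eq:Z_infsup_2}. The well-posedness of the backward problem in $Y$ that you flag is indeed the only non-bookkeeping ingredient, and the paper likewise takes it from standard theory.
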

\begin{proof}
We start by showing that
\begin{equation}\label{eq:infsup_upper_bound}
\abs{\Bz(\bm{v},\varphi)}\leq \norm{\bm{v}}_{\bm{Z}}\normYs{\varphi}\quad \forall \bm{v}\in \bm{Z},\;\forall \varphi\in Y.
\end{equation}
It follows from the definition of the $H^{-1}(\Omega)$-norm in~\eqref{eq:dual_norm} that\begin{equation}
\begin{split}
\left\lvert\int_0^T \left(-\pair{\partial_t \varphi}{v} + (\nabla \varphi, \nabla v )_\Omega \right) \dd t\right\rvert
&=\left\lvert\int_0^T -\pair{\p_t \varphi + \Delta \varphi}{v} \dd t\right\rvert
\\ &\leq \left(\int_0^T \norm{\p_t \varphi + \Delta \varphi}_{H^{-1}(\Omega)}^2\dd t\right)^{\frac{1}{2}}\norm{v}_X,
\end{split}
\end{equation}
for any $\varphi\in Y$ and $v\in X$, where we recall that $\norm{v}_X=\left(\int_0^T\norm{\nabla v}_\Omega^2\dd t\right)^{\frac{1}{2}}$.
Hence, the Cauchy--Schwarz inequality applied to the terms in $\Bz(\bm{v},\varphi)$ shows that
\begin{equation}
\begin{split}
\abs{\Bz(\bm{v},\varphi)} &\leq \left(\frac{1}{2}\norm{v_T}_
\Omega^2 + \norm{v}_X^2\right)^{\frac{1}{2}}
\left(2\norm{\varphi(T)}_\Omega^2+\int_0^T\norm{ \partial_t \varphi + 
\Delta \varphi}_{H^{-1}(\Omega)}^2\mathrm{d}t\right)^{\frac{1}{2}}
\\& =\norm{\bm{v}}_{\bm{Z}}\normYs{\varphi},
\end{split}
\end{equation}
where we have used Lemma~\ref{lem:Ys_identity} in passing to the second line above. This yields~\eqref{eq:infsup_upper_bound}.
Next, let $\bm{v}=(v,v_T)\in \bm{Z}$ be arbitrary, and let $\varphi_*\in Y$ be the unique solution of the backward parabolic problem: find $\varphi_* \in Y$ such that $\varphi_*(T)=\frac{1}{2}v_T$ and $\partial_t \varphi_* + \Delta \varphi_* =  \Delta v$ in $(0,T)$. 
Then, it is clear that $\Bz(\bm{v},\varphi)= \frac{1}{2}\norm{v_T}_\Omega^2+\norm{v}_X^2 = \norm{\bm{v}}_{\bm{Z}}^2$. Furthermore, using Lemma~\ref{lem:Ys_identity}, we have
\begin{equation}
\normYs{\varphi_*}^2 = 2\norm{\varphi(T)}_\Omega^2 + \int_0^T \norm{\partial_t \varphi+\Delta \varphi}_{H^{-1}(\Omega)}^2\mathrm{d}t = \frac{1}{2}\norm{v_T}_
\Omega^2 + \norm{v}_X^2 = \norm{\bm{v}}_{\bm{Z}}^2.
\end{equation}
Thus we obtain \eqref{eq:Z_infsup_1} from the above identities and the upper bound~\eqref{eq:infsup_upper_bound}.
To show~\eqref{eq:Z_infsup_2}, for a given $\varphi\in Y$, we take $\bm{v}_*=(v,v_T)$ with $v=(-\Delta)^{-1}(-\partial_t \varphi) + \varphi \in X$, and $v_T = 2\varphi(T)\in L^2(\Omega)$, and we perform similar computations to find that $\norm{\bm{v}}_{\bm{Z}}=\normYs{\varphi}$ and $\Bz(\bm{v},\varphi)=\normYs{\varphi}^2$. This shows~\eqref{eq:Z_infsup_2}.
\end{proof}

We remark that we use the terminology of inf-sup identity in reference to the inf-sup condition~\cite[Theorem~3.2]{Necas1962}, since Theorem~\ref{thm:Z_infsup} implies that
\begin{equation*}
\inf_{\bm{v}\in \bm{Z}\setminus\{0\}}\sup_{\varphi\in Y\setminus\{0\}}\frac{\Bz(\bm{v},\varphi)}{\norm{\bm{v}}_{\bm{Z}}\normYs{\varphi}} = \sup_{\bm{v}\in \bm{Z}\setminus\{0\}}\sup_{\varphi\in Y\setminus\{0\}}\frac{\Bz(\bm{v},\varphi)}{\norm{\bm{v}}_{\bm{Z}}\normYs{\varphi}} =1,
\end{equation*}
i.e.\ the upper and lower constants in the usual inf-sup condition both coincide and equal one.

\paragraph{Heat equation.}

Under the hypotheses that $f\in L^2(0,T;H^{-1}(\Omega))$ and $u_0\in L^2(\Omega)$, it is well-known that \eqref{eq:parabolic} admits a unique solution $u\in Y$, see~\cite{LionsMagenes1972}.
We now show how to apply Theorem~\ref{thm:Z_infsup} to the heat equation.
After testing the heat equation with a test function $\varphi\in Y$ and integrating-by-parts, we see that the solution $u$ solves
\begin{equation}\label{eq:Z_weakform}
(u(T),\varphi(T))_{\Omega} + \int_0^T \left(-\pair{\p_t \varphi}{u} + (\nabla u,\nabla \varphi)_\Omega \right)\dd t = \int_0^T \pair{f}{\varphi}\dd t + (u_0,\varphi(0))_\Omega \quad \forall \varphi\in Y.
\end{equation}
Upon defining $\bm{u}\coloneqq(u,u(T))\in \bm{Z}$, it follows that~\eqref{eq:Z_weakform} can be written equivalently as 
\begin{equation}\label{eq:Z_weakform_2}
\Bz(\bm{u},\varphi)=\int_0^T \pair{f}{\varphi}\dd t + (u_0,\varphi(0))_\Omega \quad \forall \varphi\in Y.
\end{equation}
Hence~\eqref{eq:Z_infsup_1} implies that the energy norm $\norm{u}_E$ of the solution satisfies
\begin{equation}\label{eq:Z_norm_solution}
\norm{u}_{E} 
 = \norm{\bm{u}}_{\bm{Z}}  =  \sup_{\varphi\in Y\setminus\{0\}}\frac{\int_0^T\pair{f}{\varphi}\dd t + (u_0,\varphi(0))_\Omega}{\normYs{\varphi}} .
\end{equation}
Observe that this characterization provides an alternative to the usual energy identity based on testing \eqref{eq:parabolic} with the solution $u$, with the advantage of characterizing the energy norms in terms of a dual norm of the data.
Thus, when it comes to the \emph{a posteriori} error analysis below, we shall obtain an equivalence between the energy norm and a (generally noncomputable) dual norm of the residual, which can be bounded by the computable \emph{a posteriori} error estimators.

\subsection{Numerical Scheme}
We now consider the numerical approximation of the solution of \eqref{eq:parabolic} by a conforming finite element method on a fixed spatial mesh coupled with the implicit Euler discretization in time.
Let $\{t_n\}_{n=0}^N$, for some integer $N\geq 1$, denote a strictly increasing sequence of time-step points, with $t_0 = 0$ and $t_N = T$.
For each $n\in \{1,\dots,N\}$, let $I_n \coloneqq (t_{n-1},t_n)$ denote the corresponding time-step interval, and let $\tau_n = t_n - t_{n-1} >0$ denote the corresponding step length.
Let $\T$ be a conforming simplicial mesh on $\Omega$.
For each element $K\in\T$, we let $h_K$ denote the diameter of $K$. 
Let $p\geq 1$ denote a fixed integer which will correspond to the polynomial degree of the finite element space to be defined below.
The $H^1_0(\Omega)$-conforming finite element space of order $p$ is denoted by $\VTp$, and is defined by
\begin{equation}\label{eq:VTp_def}
\VTp \coloneqq \{ v \in H^1_0(\Omega), \; v|_K \in \calP_{p}(K) \quad\forall K\in\T \},
\end{equation}
where $\calP_p(K)$ denotes the space of real-valued polynomials of total degree at most $p$ on $K$.

The analysis below will make use of the constant determining the $H^1$-stability of the $L^2$-orthogonal projection operator $\Pi_h\colon H^1_0(\Omega)\rightarrow \VTp$.
Let $C_{\Pi}$ denote the optimal constant such that
\begin{equation}\label{eq:L2_stab_ortho}
\norm{\nabla \Pi_h v}_{\Omega} \leq C_{\Pi}\norm{\nabla v}_\Omega \qquad \forall v\in H^1_0(\Omega).
\end{equation}
The fact that a constant $C_\Pi$ exists is simply a consequence of the finite dimensionality of $\VTp$ and the Poincar\'e inequality for functions in $H^1_0(\Omega)$.
It is known that the constant $C_\Pi$ is independent of the mesh-size for a wide range of meshes used in practice, see~\cite{GaspozSiebert2016} and the references therein.
Note that \eqref{eq:L2_stab_ortho} implies the following bound: 
\begin{equation}\label{eq:DiscreteNegnorm_stab}
\norm{w_h}_{H^{-1}(\Omega)}\leq C_{\Pi} \norm{w_h}_{\VTp^*} \quad \forall w_h\in \VTp,
\end{equation}
where $\norm{w_h}_{\VTp^*}\coloneqq \sup_{v_h\in\VTp\setminus\{0\}}\frac{(w_h,v_h)_\Omega}{\norm{\nabla v_h}_\Omega}$ is the discrete dual norm.
Conversely, we have $\norm{w_h}_{\VTp^*}\leq \norm{w_h}_{H^{-1}(\Omega)}$ trivially, so necessarily $C_{\Pi}\geq 1$.

In the following, the notation for inequalities $a \lesssim b$ means $a\leq C b$ for some constant $C$ that is independent of the mesh and time-step sizes, but may depend on $\Omega$, $\sdim$, the shape-regularity parameter of $\T$, defined as $\theta_\T\coloneqq \max_{K\in\T}\frac{h_K}{\rho_K}$, where $\rho_K$ denotes the diameter of the largest inscribed balls in $K$, and also on the constant $C_\Pi$ of~\eqref{eq:L2_stab_ortho}. Thus, we are essentially supposing that $C_\Pi$ is independent of the mesh size.

\paragraph{Numerical method.}
The numerical method consists of solving the following sequence of discrete problems: for each $n\in \{1,\dots, N\}$, find $u_{h,\tau,n}\in \VTp$ such that
\begin{equation}\label{eq:IE_FEM}
\begin{aligned}
\left(\frac{u_{h,\tau,n}-u_{h,\tau,n-1}}{\tau_n}, v_{h}\right)_\Omega + (\nabla u_{h,\tau,n},\nabla v_h)_\Omega = (f_{h,\tau,n},v_{h})_\Omega  &&& \forall v_h \in \VTp,
\end{aligned}
\end{equation}
where $u_{h,\tau,0} \in \VTp$ is some discrete approximation of the initial datum $u_0$, and where $f_{h,\tau,n}\in L^2(\Omega)$ is some discrete approximation of $f(t_n)$.
We do not prescribe here a specific choice of the data approximations $u_{h,\tau,0}$ and $f_{h,\tau,n}$.
Instead, we allow for rather general approximations, with the only assumptions being that $u_{h,\tau,0}\in \VTp$ as stated above, and also that $f_{h,\tau,n}|_K \in \mathcal{P}_{p}(K)$ for each $K\in \T$, for each $n\in\{1,\dots,N\}$.
We remark here that for many standard choices, the various data oscillation terms that appear further below can be bounded via \emph{a priori} estimates, see for instance~\cite[Lemma~6.2]{ESV2019}.

\paragraph{Piecewise-constant-in-time reconstruction.}

Let $\Vhtp$ denote the space of all left-continuous functions $v\colon [0,T]\rightarrow \VTp$ that are piecewise constant with respect to the time partition $\{I_n\}_{n=1}^N$, i.e.\ $v|_{I_n}\in \mathcal{P}_0(I_n;\VTp) $ for each $n\in \{1,\dots,N\}$, where  we recall that $I_n=(t_{n-1},t_n)$ is the $n$-th time-interval.
Note that a function  $v\in \Vhtp$ thus has a well-defined value $v(t)\in \VTp$ for each time $t\in[0,T]$. Note that from this point of view, functions in $\Vhtp$ that agree for almost all $t\in [0,T]$ are not identified.
Let $\uht\in \Vhtp$ be the unique function that satisfies
\begin{equation}
\begin{aligned}
\uht(0)= u_{h,\tau,0}, &&& \uht|_{I_n} =u_{h,\tau,n} \quad \forall n\in\{1,\dots,N\}.
\end{aligned}
\end{equation}
Observe also that left-continuity of $\uht$ ensures that $\uht(t_n) = u_{h,\tau,n}$ for each $n\in \{0,\dots, N\}$.

\paragraph{Continuous piecewise-affine-in-time reconstruction.}
Let $\Uht \in C([0,T];\VTp)$ as the unique continuous piecewise-affine function that satisfies
\begin{equation}\label{eq:fd_affine_recons}
\begin{aligned}
\Uht(t)= \frac{t-t_{n-1}}{\tau_n} u_{h,\tau,n} + \frac{t_n-t}{\tau_n} u_{h,\tau,n-1} &&& \forall t \in [t_{n-1},t_n].
\end{aligned}
\end{equation}
Note that, by definition, $\Uht(t_n)=u_{h,\tau,n}$ for each $n\in \{0,\dots,N\}$.

\paragraph{Reformulation of the fully discrete scheme.}
Let the piecewise constant-in-time function $f_{h,\tau} \in L^2(0,T;L^2(\Omega))$ be defined by $f_{h,\tau}|_{I_n} \coloneqq f_{h,\tau,n}$ for $n\in \{1,\dots,N\}$, where $f_{h,\tau,n}$ is the chosen approximation of $f(t_n)$ appearing in \eqref{eq:IE_FEM} above.
Under the assumptions above on the $f_{h,\tau,n}$, it follows that $f_{h,\tau}$ is not only piecewise constant in time with respect to the time-intervals $\{(t_{n-1},t_n)\}_{n=1}^N$, but also a piecewise polynomial of degree at most $p$ in space with respect to the mesh $\T$.
It is then clear that \eqref{eq:IE_FEM} implies that 
\begin{equation}\label{eq:IE_FEM_2}
  ( \p_t \Uht (t) , v_h )_\Omega +  (\nabla \uht(t) , \nabla v_h)_\Omega = (f_{h,\tau}(t),v_h)_\Omega  \quad \forall v_h\in \VTp, \quad \text{a.e.\ } t\in (0,T).
\end{equation}

\subsection{Construction of the equilibrated flux}

We consider now \emph{a posteriori} error bounds for the error based on the construction of an equilibrated flux.
The construction follows the one in~\cite{ESV2017,ESV2019}, which handles variable polynomial degrees in $hp$-FEM and also mesh modification between the time-steps. 
Since we are restricting ourselves here to the case of lowest-order approximations in time, with a single fixed polynomial degree across the spatial mesh elements, and also a fixed mesh for all time-steps, we shall give here a slightly simplified construction relative to the one in~\cite{ESV2017,ESV2019}.

Let $\Ver$ denote the set of vertices of the mesh $\T$.
Let $\Ver_{\Omega}=\Ver\cap \Omega$ denote the set of interior vertices and let $\Ver_{\partial\Omega}=\Ver\cap \partial\Omega$ denote the boundary vertices.
For each vertex $\ver\in\Ver $, let $\psia$ denote the corresponding hat function.
Note that $\{\psia\}_{\ver\in\Ver}$ forms a partition of unity of $\overline{\Omega}$, i.e. $\sum_{\ver\in\Ver}\psia(x)=1$ for all $x\in \overline{\Omega}$. 
Let $\Ta$ denote the set of all elements of $\T$ that contain $\ver$, and let $\oma$ denote the vertex patch around $\ver$, i.e.\ the union of all elements of $\Ta$.
With the convention that elements of $\T$ are considered to be closed sets, it follows that $\oma$ is the support of $\psia$.
Let $\widetilde{p}\geq p+1$ denote a fixed choice of polynomial degree that will be used for the flux reconstruction, where it is recalled that $p$ is the polynomial degree used for $\VTp$ above. 
For each $\ver\in \Ver$, let the spaces $\calP_{\widetilde{p}}(\Ta)$ and $\RTNa$ be defined by
\begin{subequations} 
\begin{align}
\calP_{\widetilde{p}}(\Ta) &\coloneqq \{ q_h \in L^2(\oma)\colon q_h|_K \in \calP_{\widetilde{p}}(K)\quad\forall K\in\Ta\},\\
\RTNa &\coloneqq \{ \bm{v}_h \in L^2(\oma;\R^\sdim) \colon \bm{v}_h|_K \in \RTNK \quad\forall K\in \Ta \},
\end{align}
\end{subequations}
where $\RTNK\coloneqq \calP_{\widetilde{p}}(K;\R^\sdim)+\bm{x}\calP_{\widetilde{p}}(K)$ denotes the RTN space of order $\widetilde{p}$ on $K$. In other words, $\calP_{\widetilde{p}}(\Ta)$ denotes the space of scalar functions that are piecewise polynomials of degree at most~$\widetilde{p}$ with respect to~$\Ta$, and~$\RTNa$ denotes the space of vector fields that are piecewise RTN of order $\widetilde{p}$ with respect to~$\Ta$. 
Note that there are no continuity conditions across mesh elements in the definition of these spaces.
We now define the local mixed finite element spaces~$\Qa$ and~$\Va$ by
\begin{subequations}
\begin{align}
\Qa &\coloneqq \begin{cases}
\left\{q_h \in \calP_{\widetilde{p}}(\Ta), \quad \int_\oma q_h \mathrm{d}x = 0 \right\} &\text{if }\ver\in\Ver_{\Omega} ,
\\
\calP_{\widetilde{p}}(\Ta) &\text{if }\ver\in\Ver_{\partial\Omega},
\end{cases}\label{eq:Qa_def}
\\
\Va &\coloneqq 
\begin{cases}
\left\{\bm{v}_h \in H(\Div,\oma) \cap \RTNa\, \quad \bm{v}_h\cdot \bm{n} =0 \text{ on } \partial \oma \right\} &\text{if } \ver\in \Ver_{\Omega},
\\
\left\{\bm{v}_h \in H(\Div,\oma) \cap \RTNa\, \quad \bm{v}_h\cdot \bm{n} =0 \text{ on } \partial \oma \setminus \partial \Omega \right\} &\text{if } \ver\in \Ver_{\partial\Omega},
\end{cases}
\end{align}
\end{subequations}
where $\bm{n}$ denotes the unit outward normal on~$\partial \oma$.
For each $\ver\in \Ver$ and each $n\in {1,\dots,N}$, define the piecewise polynomial function $g_{h,\tau}^{\ver,n}\colon \oma \rightarrow \R$ by
\begin{equation}
g_{h,\tau}^{\ver,n} \coloneqq  \psia f_{h,\tau}|_{\oma\times I_n} - \psia \p_t \Uht|_{\oma\times I_n} - \nabla\psia{\cdot}\nabla \uht|_{\oma\times I_n}.
\end{equation}
Note in particular that the choice $\widetilde{p}\geq p+1$ is used above to ensure that the terms $\psia f_{h,\tau,n}$ and $\psia \p_t \Uht$ are piecewise polynomials of degree less than or equal to $\widetilde{p}$ with respect to the spatial mesh~$\T$. Therefore $g_{h,\tau}^{a,n} \in \calP_{\widetilde{p}}(\Ta)$ for each $\ver\in\Ver$.
Note also that in the case of an interior vertex $\ver \in \Ver_{\partial\Omega}$, choosing as a test function $v_h=\psia$ in~\eqref{eq:IE_FEM_2} above implies that
\begin{equation}\label{eq:source_compatibility_condition}
\int_{\oma} g_{h,\tau}^{\ver,n} \dd x = (f_{h,\tau,n},\psia)_\Omega - (\p_t\Uht,\psia)_\Omega - (\nabla \uht,\nabla \psia)_\Omega =0,
\end{equation}
where we recall that $f_{h,\tau}|_{I_n}=f_{h,\tau,n}$ by definition.
Therefore, $g_{h,\tau}^{\ver,n}$ has zero-mean value in the case of an interior vertex $\ver$, and thus we conclude that that $g_{h,\tau}^{\ver,n}\in \Qa$ for all vertices $\ver\in \Ver$, where it is recalled that $\Qa$ was defined in~\eqref{eq:Qa_def} above.
Next, for each time-step $n\in\{1,\dots,N\}$ and each vertex $\ver\in \Ver$, we define the local flux contribution $\sht^{\ver,n} \in \Va$ by 
 \begin{equation}
\sht^{\ver,n}\coloneqq \argmin_{\substack{\bm{v}_{h,\tau}\in \Va \\ \nabla\cdot\bm{v}_{h,\tau} = g_{h,\tau}^{\ver,n}}} \norm{\bm{v}_{h,\tau}+\psia\nabla u_{h,\tau}|_{I_n}}_{\oma}.
\end{equation}
Crucially, the fact that $\sht^{\ver,n}$ is well-defined in the case of an interior vertex is a consequence of~\eqref{eq:source_compatibility_condition}.
Next, we extend the individual local contributions by zero to the whole domain $\Omega$. Note that boundary conditions imposed on vector fields in $\Va$ imply that the extension by zero of $\sht^{\ver,n}$ to the rest of $\Omega$, with same notation, is in $H(\Div,\Omega)$.
Therefore, we may define the global flux $\sht\in L^2(0,T;H(\Div,\Omega))$ as the unique piecewise constant-in-time vector field such that
\begin{equation}\label{eq:sht_def}
\sht|_{I_n} \coloneqq \sum_{\ver\in\Ver} \sht^{\ver,n} \quad \forall n\in \{1,\dots,N\}.
\end{equation}
Then, following the same argument as in~\cite[Theorem~4.2]{ESV2017}, we have the equilibration identity
\begin{equation}\label{eq:sht_equilib}
\p_t \Uht + \nabla \cdot \sht = f_{h,\tau} \quad\text{a.e.\ in } \Omega\times (0,T).
\end{equation}

\section{Main results}

We now give the main results on the \emph{a posteriori} error bound for the energy norm of the error. In Theorem~\ref{thm:energy_main_bound}, we show that the energy norm of the error $u-\ouht$ can be bounded, up to data oscillation, in terms of computable estimators derived from $\uht-\Uht$ and $\sht+\nabla \ouht$. Furthermore, the resulting estimators are globally efficient, as shown in Theorem~\ref{thm:lower_bound} below.

\begin{theorem}[Upper bound]\label{thm:energy_main_bound}
We have the upper bound
\begin{equation}\label{eq:energy_main_upper}
\norm{u-\ouht}_E \leq \left(\int_0^T \left( \frac{1}{4}\norm{\nabla(\uht-\Uht)}_\Omega^2+\norm{\sht+\nabla\ouht}_{\Omega}^2\right)\mathrm{d}t\right)^{\frac{1}{2}} + \etaOscEth,
\end{equation}
where the data oscillation term $\etaOscEth$ is defined by
\begin{equation}\label{eq:def_estaosceth}
\etaOscEth \coloneqq \sup_{\varphi\in Y\setminus\{0\}}\frac{\int_0^T\pair{f-f_{h,\tau}}{\varphi} \mathrm{d}t + (u_0-u_{h,\tau,0},\varphi(0))_{\Omega}}{\normYs{\varphi}}.
\end{equation}
\end{theorem}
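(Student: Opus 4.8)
The plan is to exploit the inf-sup identity~\eqref{eq:Z_infsup_1} from Theorem~\ref{thm:Z_infsup}, applied to the error pair. Set $\bm{e} \coloneqq (u - \ouht, \,(u-\ouht)(T)) \in \bm{Z}$, noting that $u - \ouht \in Y \subset C([0,T];L^2(\Omega))$ since both $u$ and $\ouht$ lie in $Y$. By the definition~\eqref{eq:Z_norm} of $\norm{\cdot}_{\bm{Z}}$ and the energy norm~\eqref{eq:energy_identity}, we have $\norm{\bm{e}}_{\bm{Z}} = \norm{u - \ouht}_E$. Hence~\eqref{eq:Z_infsup_1} gives
\begin{equation*}
\norm{u-\ouht}_E = \sup_{\varphi \in Y\setminus\{0\}} \frac{\Bz(\bm{e},\varphi)}{\normYs{\varphi}},
\end{equation*}
so it suffices to bound $\Bz(\bm{e},\varphi)$ by the right-hand side of~\eqref{eq:energy_main_upper} times $\normYs{\varphi}$, uniformly in $\varphi$.

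The next step is to expand $\Bz(\bm{e},\varphi) = \Bz(\bm{u},\varphi) - \Bz(\bm{\overline{u}}_{h,\tau},\varphi)$, where $\bm{\overline{u}}_{h,\tau} \coloneqq (\ouht, \ouht(T))$. The first term is known exactly from~\eqref{eq:Z_weakform_2}: it equals $\int_0^T \pair{f}{\varphi}\,\mathrm{d}t + (u_0,\varphi(0))_\Omega$. For the second term, I would unfold the definition~\eqref{eq:B_bilinear} of $\Bz$ and integrate by parts in time in the term $-\int_0^T \pair{\p_t\varphi}{\ouht}\,\mathrm{d}t$, using $\ouht(T) = \uht(T) = \Uht(T)$, to rewrite things in terms of $\p_t \ouht$ rather than $\p_t\varphi$. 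Then I insert $\pm$ the equilibrated flux: using the equilibration identity~\eqref{eq:sht_equilib}, namely $\p_t\Uht + \nabla\cdot\sht = f_{h,\tau}$, and the fact that $\p_t\ouht = \tfrac12(\p_t\uht + \p_t\Uht) = \tfrac12\p_t\Uht$ on each $I_n$ (since $\uht$ is piecewise constant in time), I can express $\Bz(\bm{e},\varphi)$ as the data-oscillation pairing $\int_0^T\pair{f-f_{h,\tau}}{\varphi}\,\mathrm{d}t + (u_0-u_{h,\tau,0},\varphi(0))_\Omega$ plus a term of the form $\int_0^T\big[(\sht + \nabla\ouht, \nabla\varphi)_\Omega + (\text{something involving }\uht-\Uht)\big]\,\mathrm{d}t$. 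The precise bookkeeping is: $-\pair{\p_t\varphi}{\ouht}$ contributes, after integration by parts, $\pair{\p_t\ouht}{\varphi}$ (plus endpoint terms that cancel against $(v_T,\varphi(T))_\Omega$ and $(u_0,\varphi(0))_\Omega$), and $\pair{\p_t\ouht}{\varphi} = \pair{\tfrac12\p_t\Uht}{\varphi}$; combining with $\nabla\cdot\sht = f_{h,\tau}-\p_t\Uht$ tested against $\varphi$ (valid since $\sht \in L^2(0,T;\Hdiv)$ and $\varphi(t)\in H^1_0$, so $(\nabla\cdot\sht,\varphi)_\Omega = -(\sht,\nabla\varphi)_\Omega$), the non-oscillation part reduces to $\int_0^T\big(-(\sht+\nabla\ouht,\nabla\varphi)_\Omega - \tfrac12(\p_t\Uht,\varphi)_\Omega + \dots\big)$. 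I would organize the remaining $\tfrac12\p_t\Uht$ piece so that it pairs with the temporal-jump term: since $\p_t\Uht|_{I_n} = (u_{h,\tau,n}-u_{h,\tau,n-1})/\tau_n$ and $(\uht-\Uht)|_{I_n}(t) = \tfrac{t-t_n}{\tau_n}(u_{h,\tau,n}-u_{h,\tau,n-1})$, one has $\p_t(\uht-\Uht) = -\p_t\Uht$ on $I_n$, giving the link to $\nabla(\uht-\Uht)$ after a further manipulation.

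After this rearrangement, I apply the Cauchy--Schwarz inequality in space and then in time to each of the two "computable" terms, bounding them by
\begin{equation*}
\left(\int_0^T\Big(\tfrac14\norm{\nabla(\uht-\Uht)}_\Omega^2 + \norm{\sht+\nabla\ouht}_\Omega^2\Big)\mathrm{d}t\right)^{1/2}\!\!\cdot\left(\int_0^T\big(\norm{\p_t\varphi}_{H^{-1}(\Omega)}^2 + \norm{\nabla\varphi}_\Omega^2\big)\mathrm{d}t\right)^{1/2},
\end{equation*}
where the second factor is at most $\normYs{\varphi}$ by~\eqref{eq:Ys_def}; the factor $\tfrac12$ on $\nabla(\uht-\Uht)$ arises precisely because the temporal-jump contribution is measured against $\p_t\varphi$ in the $H^{-1}$-norm, matching the $\tfrac12\p_t\Uht$ coefficient after the identity $\p_t\Uht = -\p_t(\uht-\Uht)$ is used and a $\nabla$-duality bound $\norm{\p_t(\uht-\Uht)}_{H^{-1}} \le \norm{\nabla(\uht-\Uht)}_\Omega$ is invoked appropriately (or, more carefully, by splitting the $\tfrac12\p_t\Uht$ term symmetrically). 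The data-oscillation pairing is bounded directly by $\etaOscEth \normYs{\varphi}$ by its definition~\eqref{eq:def_estaosceth}. Dividing through by $\normYs{\varphi}$ and taking the supremum then yields~\eqref{eq:energy_main_upper}.

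The main obstacle I anticipate is the precise algebraic accounting that produces the coefficient $\tfrac14$ (equivalently $\tfrac12$ before taking squares) in front of $\norm{\nabla(\uht-\Uht)}_\Omega^2$, rather than $1$. Getting this sharp constant requires carefully tracking how the $\p_t\Uht$ term, which equals $-\p_t(\uht-\Uht)$ pointwise in time, is tested against $\varphi$: one must use that $\uht - \Uht$ vanishes at all nodes $t_n$, integrate by parts on each $I_n$ to move the time derivative onto $\varphi$, and then recognize that the resulting quantity $(\nabla(\uht-\Uht),\nabla\varphi)_\Omega$-type term combines with the $\p_t\varphi$-duality structure of $\normYs{\cdot}$ in a way that only costs a factor $\tfrac12$. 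An alternative, perhaps cleaner route to the same constant is to split $\ouht = \tfrac12\uht + \tfrac12\Uht$ from the outset and handle the two reconstructions' residuals separately, mirroring the semi-discrete orthogonality computation~\eqref{eq:orthogonality}--\eqref{eq:hypercircle_2}; the cross terms should then reassemble into exactly the stated bound. I would also need to verify the technical point that $\ouht \in Y$ (so that all the integration-by-parts-in-time identities, in particular $\int_0^T 2\pair{\p_t\psi}{\psi}\,\mathrm{d}t = \norm{\psi(T)}_\Omega^2 - \norm{\psi(0)}_\Omega^2$, are legitimate), which follows since $\Uht \in C([0,T];\VTp) \cap H^1(0,T;\VTp)$ and $\uht \in \Vhtp \subset L^2(0,T;\VTp)$ with distributional time derivative in $L^2(0,T;H^{-1}(\Omega))$ (indeed a sum of Dirac-free $L^2$-in-space pieces once one checks the jumps of $\uht$ are absorbed — more carefully, $\uht \notin H^1(0,T;H^{-1})$, so one should work with $\ouht$ only through $\Uht$ and treat the $\uht$-contribution as entering purely through the spatial term $(\nabla\uht,\nabla\varphi)_\Omega$, which is exactly how~\eqref{eq:IE_FEM_2} is structured).
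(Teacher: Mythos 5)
Your overall strategy coincides with the paper's: start from the inf-sup identity \eqref{eq:Z_infsup_1} applied to $(u-\ouht,(u-\ouht)(T))\in\bm{Z}$, expand the residual as $\Bz(\bm{u},\varphi)-\Bz(\overline{\bm{u}}_{h,\tau},\varphi)$, insert the equilibration identity \eqref{eq:sht_equilib}, and finish with Cauchy--Schwarz against $\normYs{\varphi}$. However, the middle step --- which you yourself flag as ``the main obstacle'' --- is exactly where your proposed manipulations would fail, and the proposal as written does not close it. First, you cannot integrate by parts to replace $-\int_0^T\pair{\p_t\varphi}{\ouht}\,\mathrm{d}t$ by $\int_0^T\pair{\p_t\ouht}{\varphi}\,\mathrm{d}t$ plus endpoint terms: $\uht$ jumps at the interior nodes, so $\p_t\ouht$ carries Dirac masses there, and the identity $\p_t\ouht=\frac12\p_t\Uht$ holds only on the open intervals $I_n$; the jump terms at the interior nodes do not cancel and would have to be carried along. (You notice this at the very end; note also that your claim $u-\ouht\in Y$ is false for the same reason --- only membership in $\bm{Z}$ is needed, and that is all the inf-sup identity requires.) Second, your fallback of integrating by parts on each $I_n$ using ``$\uht-\Uht$ vanishes at all nodes'' is also incorrect: $(\uht-\Uht)|_{I_n}(t)=\frac{t_n-t}{\tau_n}(u_{h,\tau,n}-u_{h,\tau,n-1})$ vanishes at the right endpoint $t_n$ but equals the full jump at the left endpoint, so the interval-wise boundary terms survive.

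The missing observation that renders all of this unnecessary is to keep the time derivative on $\varphi$ throughout. One integrates by parts only in the identity $\int_0^T(\p_t\Uht,\varphi)_\Omega\,\mathrm{d}t=(\Uht(T),\varphi(T))_\Omega-(\Uht(0),\varphi(0))_\Omega-\int_0^T\pair{\p_t\varphi}{\Uht}\,\mathrm{d}t$, which is legitimate because $\Uht\in H^1(0,T;\VTp)$. Substituting this together with $\nabla\cdot\sht=f_{h,\tau}-\p_t\Uht$ into the residual, the non-oscillation part collapses to $\int_0^T\left[\pair{\p_t\varphi}{\ouht-\Uht}-(\sht+\nabla\ouht,\nabla\varphi)_\Omega\right]\mathrm{d}t$ with no leftover endpoint or jump terms (the endpoint terms cancel using $\ouht(T)=\Uht(T)$ and $\Uht(0)=u_{h,\tau,0}$). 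The constant $\frac14$ then requires no delicate bookkeeping: bound $\abs{\pair{\p_t\varphi}{\ouht-\Uht}}\leq\norm{\p_t\varphi}_{H^{-1}(\Omega)}\norm{\nabla(\ouht-\Uht)}_\Omega$ and use $\ouht-\Uht=\frac12(\uht-\Uht)$, so that $\norm{\nabla(\ouht-\Uht)}_\Omega^2=\frac14\norm{\nabla(\uht-\Uht)}_\Omega^2$; the final Cauchy--Schwarz in time against $\bigl(\int_0^T(\norm{\p_t\varphi}_{H^{-1}(\Omega)}^2+\norm{\nabla\varphi}_\Omega^2)\,\mathrm{d}t\bigr)^{1/2}\leq\normYs{\varphi}$ then gives \eqref{eq:energy_main_upper} exactly.
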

\begin{proof}
Theorem~\ref{thm:Z_infsup} implies that
\begin{equation}
\norm{u-\ouht}_E = \sup_{\varphi\in Y\setminus\{0\}}\frac{\pair{\mathcal{R}(\ouht)}{\varphi}_{Y^*\times Y}}{\normYs{\varphi}},
\end{equation}
where $Y^*$ denotes the dual space of $Y$ and the residual $\mathcal{R}(\ouht)$ is defined by
\begin{multline}\label{eq:energy_main_1}
\pair{\mathcal{R}(\ouht)}{\varphi}_{Y^*\times Y}\coloneqq \int_0^T \pair{f}{\varphi}\mathrm{d}t + (u_0,\varphi(0))_\Omega
\\  - (\ouht(T),\varphi(T))_{\Omega}
-\int_0^T\left[-\pair{\p_t\varphi}{\ouht}+(\nabla \ouht,\nabla\varphi)_\Omega \right] \mathrm{d}t,
\end{multline}
for all $\varphi\in Y$.
Using the flux equilibration identity~\eqref{eq:sht_equilib}, integration-by-parts, and the identities $\ouht(T)=\Uht(T)$ and $\Uht(0)=u_{h,\tau,0}$, we see that, for any $\varphi\in Y$,
\begin{equation}\label{eq:energy_main_2}
\int_0^T (f_{h,\tau},\varphi)_\Omega\mathrm{d}t = (\ouht(T),\varphi(T))_\Omega - (u_{h,\tau,0},\varphi(0))_{\Omega} + \int_0^T \left[-\pair{\p_t\varphi}{\Uht}-(\sht,\nabla \varphi)_\Omega \right]\mathrm{d}t.
\end{equation}
Therefore, the combination of~\eqref{eq:energy_main_1} with \eqref{eq:energy_main_2} implies that
\begin{multline}
\pair{\mathcal{R}(\ouht)}{\varphi}_{Y^*\times Y} = \int_0^T \pair{f-f_{h,\tau}}{\varphi}\mathrm{d}t + (u_0-u_{h,\tau,0},\varphi(0))_{\Omega}
\\ + \int_0^T\left[\pair{\p_t\varphi}{\ouht - \Uht} - (\sht+\nabla \ouht,\nabla \varphi)_{\Omega} \right]\mathrm{d}t.
\end{multline}
The triangle inequality and the Cauchy--Schwarz inequality then imply that
\begin{equation}
\sup_{\varphi\in Y\setminus\{0\}}\frac{\pair{\mathcal{R}(\ouht)}{\varphi}_{Y^*\times Y}}{\normYs{\varphi}} \leq \left(\int_0^T \left( \norm{\nabla(\ouht-\Uht)}_\Omega^2+\norm{\sht+\nabla\ouht}_{\Omega}^2\right)\mathrm{d}t\right)^{\frac{1}{2}}+\etaOscEth,
\end{equation}
which shows the upper bound~\eqref{eq:energy_main_upper} upon noting that $\ouht-\Uht=\frac{1}{2}(\uht-\Uht)$.
\end{proof}

We now state the lower bound for the error in terms of the estimators, showing global efficiency of the estimators under the additional condition of the form $h^2\lesssim \tau$. In the following, let $h_\oma \coloneqq \diam \oma$ denote the diameter of the vertex patch $\oma$ for each $\ver\in\Ver$. We restrict ourselves here to the case where the space dimension $\sdim\leq 3 $ as this result makes use of some existing results in~\cite{ESV2019}.

\begin{theorem}[Lower bound]\label{thm:lower_bound}
If $1\leq \sdim \leq 3 $ and if there exists a constant $\gamma>0$ such that $h_{\oma}^2 \leq \gamma \tau_n$ for all $\ver\in\Ver$ and all $n\in\{1,\dots,N\}$, then we also have the global lower bound
\begin{equation}\label{eq:energy_main_lower}
\int_0^T \left( \frac{1}{4}\norm{\nabla(\uht-\Uht)}_\Omega^2+\norm{\sht+\nabla\ouht}_{\Omega}^2\right)\mathrm{d}t \lesssim \norm{u-\ouht}_E^2 + \left[\etaOscEthGlobal\right]^2,
\end{equation}
where the data oscillation term $\etaOscEthGlobal$ is defined by
\begin{subequations}
\begin{gather}
\left[\etaOscEthGlobal\right]^2\coloneqq \left[\wetaOscEth\right]^2
+ \sum_{n=1}^N\sum_{\ver\in\Ver}\left[\eta_{\mathrm{osc}}^{\ver,n}\right]^2,
\\
\wetaOscEth \coloneqq \sup_{\substack{ \varphi_h \in H^1(0,T;\VTp)\setminus\{0\} \\ \varphi_h(0)=0 }}\frac{\int_0^T\pair{f-f_{h,\tau}}{\varphi_h}\mathrm{d}t}{\normYs{\varphi_h}}, \qquad
\left[\eta_{\mathrm{osc}}^{\ver,n}\right]^2 \coloneqq \int_{I_n} \norm{f-f_{h,\tau}}_{H^{-1}(\oma)}^2\mathrm{d}t.
\label{eq:data_osc_2}
\end{gather}
\end{subequations}
The hidden constant in~\eqref{eq:energy_main_lower} depends only on the shape-regularity of~$\T$, the dimension~$\sdim$,  the constant~$\gamma$, and on the constant~$C_{\Pi}$ appearing in~\eqref{eq:L2_stab_ortho}.
\end{theorem}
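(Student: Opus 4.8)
The plan is to bound each of the two estimator contributions in the left-hand side of~\eqref{eq:energy_main_lower} separately: the temporal jump term $\int_0^T \tfrac14\norm{\nabla(\uht-\Uht)}_\Omega^2\,\dd t$ and the flux term $\int_0^T\norm{\sht+\nabla\ouht}_\Omega^2\,\dd t$. For the temporal jump term I would first observe that $\ouht - \Uht = \tfrac12(\uht - \Uht)$, so it suffices to control $\int_0^T\norm{\nabla(\ouht-\Uht)}_\Omega^2\,\dd t$. I expect this to follow from testing the equation satisfied by $\ouht-\Uht$ (or equivalently by $\uht-\Uht$, which restricted to $I_n$ is the affine interpolant of the jump $u_{h,\tau,n}-u_{h,\tau,n-1}$) against a suitable discrete-in-space function, using~\eqref{eq:IE_FEM_2} together with the $H^1$-stability constant $C_\Pi$ to pass between the continuous dual norm $\norm{\cdot}_{H^{-1}(\Omega)}$ and the discrete dual norm $\norm{\cdot}_{\VTp^*}$ via~\eqref{eq:DiscreteNegnorm_stab}. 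The condition $h_\oma^2 \lesssim \tau_n$ will enter here to absorb the inverse-inequality factors of the form $h_K^{-2}$ that arise when the temporal jump is bounded in terms of spatial residuals, exactly as in the efficiency analysis of~\cite{ESV2019}; this is the step that genuinely requires the one-sided mesh/time-step condition and restricts us to $\sdim\le 3$.

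For the flux term, the strategy is the standard one for equilibrated fluxes: localize. Writing $\sht|_{I_n} = \sum_{\ver\in\Ver}\sht^{\ver,n}$ and using $\psia\nabla\ouht = \psia\nabla\Uht - \tfrac12\psia\nabla(\Uht-\uht)$ together with $\sum_\ver \psia = 1$, I would bound $\norm{\sht^{\ver,n} + \psia\nabla\ouht|_{I_n}}_{\oma}$ for each patch. Since $\sht^{\ver,n}$ solves the local constrained minimization, its norm is controlled by the minimum over the mixed finite element space, which by the stability theory of mixed methods on vertex patches (as invoked in~\cite{ESV2017,ESV2019}) is in turn bounded by a local dual norm of the data $g_{h,\tau}^{\ver,n}$ plus $\norm{\psia\nabla u_{h,\tau}|_{I_n}}_{\oma}$. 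Unwinding $g_{h,\tau}^{\ver,n}$ and comparing against the local residual of the exact equation, the $\eta_{\mathrm{osc}}^{\ver,n}$ terms appear as the difference between $f$ and $f_{h,\tau}$ measured in $\norm{\cdot}_{H^{-1}(\oma)}$; the remaining pieces are local $H^{-1}(\oma)$-norms of the residual of $\ouht$ which, after summing over the finite-overlap patches, are controlled by $\norm{u-\ouht}_E$ via the global inf-sup characterization of Theorem~\ref{thm:Z_infsup}, again with the temporal jump contribution reabsorbed using $h_\oma^2\lesssim\tau_n$.

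Finally I would assemble the pieces: sum over $n$, use the bounded overlap of the patches $\{\oma\}$ (a consequence of shape-regularity) to convert $\sum_\ver$ of local quantities into a global quantity, collect the data-difference contributions into $\wetaOscEth$ and $\sum_{n,\ver}[\eta_{\mathrm{osc}}^{\ver,n}]^2$, and invoke~\eqref{eq:Z_norm_solution} (or more precisely the residual characterization from the proof of Theorem~\ref{thm:energy_main_bound}) to identify the global dual norm of the residual of $\ouht$ with $\norm{u-\ouht}_E$. I expect the main obstacle to be the bound on the temporal jump estimator in terms of the error plus oscillation: naively, $\norm{\nabla(\uht-\Uht)}$ is only controllable in terms of a negative-order norm of $\p_t(\uht-\Uht)$, which is where the discrete $H^1$-stability of $\Pi_h$ and the condition $h_\oma^2\lesssim\tau_n$ become essential, and where one must carefully invoke the analogous result of~\cite{ESV2019} rather than re-deriving it. The transition between the continuous energy norm on the left (which uses the full $H^{-1}(\Omega)$ duality) and the discrete-in-space residuals that naturally arise from~\eqref{eq:IE_FEM} is the delicate conceptual point, mediated throughout by the constant $C_\Pi$.
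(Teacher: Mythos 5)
Your overall architecture matches the paper's: control the flux term through the $X$-norm efficiency result imported from \cite{ESV2019} (Lemma~\ref{lem:X_norm_lower_bounds}), pass between $\sht+\nabla\ouht$ and $\sht+\nabla\uht$ by the triangle inequality and $\ouht-\uht=\frac12(\Uht-\uht)$, separately establish efficiency of the temporal jump estimator, and assemble. However, there is a genuine gap in the one part that is new in this paper, namely the bound $\norm{\uht-\Uht}_X\lesssim\norm{u-\ouht}_E+\wetaOscEth$. You propose to obtain it by ``testing the equation satisfied by $\uht-\Uht$'' and absorbing inverse-inequality factors $h_K^{-2}$ using $h_\oma^2\lesssim\tau_n$, and you assert that this is the step requiring the mesh condition and $\sdim\le 3$. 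That route would not deliver the stated result: bounding the temporal jump by spatial residuals through inverse inequalities is essentially the classical argument of \cite{Verfurth1998a,Picasso1998}, which needs two-sided conditions such as $\tau\simeq h^2$ and/or leaves extra terms on the right-hand side. The paper's jump efficiency (Lemma~\ref{lem:jump_discrete_energy_stab}) uses \emph{no} mesh condition and no dimension restriction; its only ingredient is $C_\Pi$. The mechanism is a duality construction that your sketch does not supply: one solves the discrete forward parabolic problem $(\p_t\varphi_h,v_h)_\Omega+(\nabla\varphi_h,\nabla v_h)_\Omega=(\nabla(\uht-\Uht),\nabla v_h)_\Omega$ for all $v_h\in\VTp$ with $\varphi_h(0)=0$, verifies by an exact energy identity (no inverse inequalities) that $\normYs{\varphi_h}\lesssim\norm{\uht-\Uht}_X$ with constant depending only on $C_\Pi$, and combines this with the identity of Lemma~\ref{lem:main_identity}, which rewrites $\frac12\int_0^T[(\p_t\varphi_h,\uht-\Uht)_\Omega+(\nabla\varphi_h,\nabla(\uht-\Uht))_\Omega]\,\dd t$ as $\Bz(\bm{u}-\overline{\bm{u}}_{h,\tau},\varphi_h)$ minus data oscillation; the continuity half of Theorem~\ref{thm:Z_infsup} then gives $\Bz(\bm{u}-\overline{\bm{u}}_{h,\tau},\varphi_h)\le\norm{u-\ouht}_E\normYs{\varphi_h}$. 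Without some such construction your plan has no way to connect $\norm{\nabla(\uht-\Uht)}_{L^2(L^2)}$ to the energy norm of the error, which contains no control of time derivatives.

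A second, related point: the hypotheses $\sdim\le 3$ and $h_\oma^2\le\gamma\tau_n$ are consumed entirely by the flux efficiency bound of Lemma~\ref{lem:X_norm_lower_bounds} (where $\p_t\Uht$ entering the divergence constraint must be compared with spatial quantities), not by the jump bound, and the reabsorption of the jump contribution is done by Lemma~\ref{lem:jump_discrete_energy_stab} alone rather than by the mesh condition. Your localization sketch for the flux term is broadly consistent with \cite{ESV2019}, but the paper does not re-derive it; it cites $\int_0^T\norm{\sht+\nabla\uht}_\Omega^2\,\dd t\lesssim\norm{u-\uht}_X^2+\norm{\uht-\Uht}_X^2+\mathrm{osc.}$ directly and then replaces $\norm{u-\uht}_X$ by $\norm{u-\ouht}_X\le\norm{u-\ouht}_E$ plus half a jump term.
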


The proof of Theorem~\ref{thm:lower_bound} is the subject of Section~\ref{sec:proof_of_the_lower_bound} below.
Theorems~\ref{thm:energy_main_bound} and~\ref{thm:lower_bound} show global upper and lower bounds on the error. 
Therefore, the error estimator gives a guaranteed upper bound on the error up to data oscillation, i.e.\ there are no unknown constants in the upper bound. 
Furthermore, the error estimators are also globally efficient.
However, the local efficiency is less clear; essentially this is due to the fact that the analysis of the jump estimator $\norm{\uht-\Uht}_X$ hinges on some nonlocal arguments both in space and in time.
Note that the locality of the efficiency bounds of the estimator $\norm{\uht-\Uht}_X$ is also a challenge in the setting of $Y$-norm error estimators for $u-\Uht$, c.f.\ \cite{Verfurth2003}, where the lower bounds are global in space.

\begin{remark}[Alternative error quantifiers]\label{rem:alternative_error_measures}
Various alternative error quantifiers have appeared in the literature in order to handle the problem of efficiency of the estimators. Often these quantifiers combine the norms of the errors for the two reconstructions $\uht$ and $\Uht$ simultaneously, see for instance~\cite{AkrivisMakridakisNochetto2009,MakridakisNochetto2006,Schotzau2010}. In particular, local efficiency of the estimators in space and time was shown in~\cite{ESV2019} for the setting of $L^2(H^1)$ norm.
In the context of the energy norm, we can consider the quantity $\mathcal{E}_{h,\tau}\coloneqq \norm{u-\uht}_E + \norm{u-\Uht}_E$, in which case the analysis above and the results in~\cite{ESV2019} imply that, up to data oscillation terms, we have some form of global equivalence between $\mathcal{E}_{h,\tau}$ and $\norm{u-\ouht}_E$, at least under the hypotheses of Theorem~\ref{thm:lower_bound}.
This offers a further perspective on the global nature of these alternative error quantifiers that have been considered in the literature, although the localization of these error measures might still differ. 
\end{remark}

\section{Proof of the lower bound}\label{sec:proof_of_the_lower_bound}

The starting point for the proof of the lower bound is to bound the temporal jump estimator in terms of the energy norm of the error. To this end, we derive an identity for the residual when the test functions are restricted to the subspace $H^1(0,T;\VTp)\subset Y$, which consists of functions that are $H^1$ w.r.t\ time with values in $\VTp$. 

\begin{lemma}[Identity for the difference between reconstructions]\label{lem:main_identity}
For all $\varphi_h \in H^1(0,T;\VTp)$, we have the identity
\begin{multline}\label{eq:jump_discrete_energy_stab_3}
 \frac{1}{2}\int_{0}^{T} \left[(\partial_t \varphi_h ,\uht-\Uht)_\Omega+(\nabla \varphi_h,\nabla(\uht-\Uht))_\Omega  \right]\mathrm{d}t  \\ = \Bz(\bm{u}-\overline{\bm{u}}_{h,\tau},\varphi_h) -  \int_{0}^{T} \pair{f-f_{h,\tau}}{\varphi_h}\mathrm{d}t - (u_0-u_{h,\tau,0},\varphi_h(0))_\Omega, 
\end{multline}
where the bilinear form $\Bz$ is defined above in~\eqref{eq:B_bilinear}, and where $\bm{u}, \overline{\bm{u}}_{h,\tau}\in \bm{Z}$ are defined by $\bm{u}\coloneqq (u,u(T))$ and $\overline{\bm{u}}_{h,\tau}\coloneqq (\ouht,\ouht(T))$.
\end{lemma}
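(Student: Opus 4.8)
The plan is to derive the identity \eqref{eq:jump_discrete_energy_stab_3} by starting from the definition of the bilinear form $\Bz$ applied to $\bm{u}-\overline{\bm{u}}_{h,\tau}$, and rewriting it using the weak form of the heat equation \eqref{eq:Z_weakform_2} together with the discrete scheme \eqref{eq:IE_FEM_2}. Concretely, for any $\varphi_h\in H^1(0,T;\VTp)\subset Y$, expanding $\Bz(\bm{u}-\overline{\bm{u}}_{h,\tau},\varphi_h)$ via \eqref{eq:B_bilinear} gives two groups of terms: the exact-solution part $\Bz(\bm{u},\varphi_h)$, which by \eqref{eq:Z_weakform_2} equals $\int_0^T\pair{f}{\varphi_h}\dd t+(u_0,\varphi_h(0))_\Omega$, and the discrete part $\Bz(\overline{\bm{u}}_{h,\tau},\varphi_h)$ with $\overline{\bm{u}}_{h,\tau}=(\ouht,\ouht(T))$, which needs to be related to the discrete data $f_{h,\tau}$ and $u_{h,\tau,0}$.

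First I would handle the discrete part. Since $\ouht=\frac12(\uht+\Uht)$, write $\ouht=\Uht+\frac12(\uht-\Uht)$ inside $\Bz(\overline{\bm{u}}_{h,\tau},\varphi_h)$. For the $\Uht$ piece, use the reformulated scheme \eqref{eq:IE_FEM_2} — valid since $\varphi_h(t)\in\VTp$ for a.e.\ $t$ — which gives $(\p_t\Uht,\varphi_h)_\Omega+(\nabla\uht,\nabla\varphi_h)_\Omega=(f_{h,\tau},\varphi_h)_\Omega$ a.e.\ in time; integrating in time and integrating by parts in time the term $\int_0^T(\p_t\Uht,\varphi_h)_\Omega\dd t=(\Uht(T),\varphi_h(T))_\Omega-(\Uht(0),\varphi_h(0))_\Omega-\int_0^T\pair{\p_t\varphi_h}{\Uht}\dd t$, combined with $\Uht(0)=u_{h,\tau,0}$ and $\ouht(T)=\Uht(T)$, reproduces exactly the structure of \eqref{eq:B_bilinear}. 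This is essentially the computation already carried out in \eqref{eq:energy_main_2} in the proof of Theorem~\ref{thm:energy_main_bound}, but now with test functions in $H^1(0,T;\VTp)$ rather than general $\varphi\in Y$, and keeping the extra $\frac12(\uht-\Uht)$ term explicit rather than absorbing it. The leftover $\frac12(\uht-\Uht)$ piece of $\Bz(\overline{\bm{u}}_{h,\tau},\varphi_h)$ is precisely $\frac12\int_0^T\bigl[-\pair{\p_t\varphi_h}{\uht-\Uht}+(\nabla\varphi_h,\nabla(\uht-\Uht))_\Omega\bigr]\dd t$ plus the boundary term $\frac12((\uht-\Uht)(T),\varphi_h(T))_\Omega$, and the latter vanishes because $\uht(T)=u_{h,\tau,N}=\Uht(T)$. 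Note the sign: the term $-\pair{\p_t\varphi_h}{\uht-\Uht}$ on the $\Bz$ side becomes $+(\p_t\varphi_h,\uht-\Uht)_\Omega$ after moving it to the left-hand side of \eqref{eq:jump_discrete_energy_stab_3}, which matches the claimed identity (and here one can freely replace the duality pairing $\pair{\p_t\varphi_h}{\cdot}$ by the $L^2$ inner product since $\uht-\Uht$ takes values in $L^2(\Omega)$ and $\p_t\varphi_h\in L^2(\Omega)$ a.e.).

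Assembling the pieces, $\Bz(\bm{u}-\overline{\bm{u}}_{h,\tau},\varphi_h)$ equals $\int_0^T\pair{f-f_{h,\tau}}{\varphi_h}\dd t+(u_0-u_{h,\tau,0},\varphi_h(0))_\Omega$ plus the half-integral of the $\uht-\Uht$ terms, which is exactly \eqref{eq:jump_discrete_energy_stab_3} after rearrangement. I do not expect a genuine obstacle here; the only point requiring care is the bookkeeping of the various boundary-in-time terms at $t=0$ and $t=T$, making sure the $(u_0-u_{h,\tau,0},\varphi_h(0))_\Omega$ term survives with the correct sign while the terminal-time contributions cancel by virtue of $\ouht(T)=\Uht(T)=\uht(T)$. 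A secondary point worth stating explicitly is that $H^1(0,T;\VTp)$ is indeed a subspace of $Y$, so that $\Bz(\cdot,\varphi_h)$ and $\normYs{\varphi_h}$ are well-defined for such $\varphi_h$; this follows since $\VTp\subset H^1_0(\Omega)$ and constant-in-space-time embeddings give $\varphi_h\in L^2(0,T;H^1_0(\Omega))\cap H^1(0,T;L^2(\Omega))\subset Y$.
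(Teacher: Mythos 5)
Your overall strategy is exactly the paper's: expand $\Bz(\bm{u}-\overline{\bm{u}}_{h,\tau},\varphi_h)$, use \eqref{eq:Z_weakform_2} for the exact part, and reduce the discrete part to the scheme \eqref{eq:IE_FEM_2} via integration by parts in time, keeping the $\frac12(\uht-\Uht)$ remainders explicit; the boundary-term bookkeeping at $t=0$ and $t=T$ and the observation that $H^1(0,T;\VTp)\subset Y$ are also handled correctly. However, there is a concrete accounting error in the gradient term. You substitute $\ouht=\Uht+\frac12(\uht-\Uht)$ into \emph{every} term of $\Bz(\overline{\bm{u}}_{h,\tau},\varphi_h)$, so your ``main piece'' contains $(\nabla\varphi_h,\nabla\Uht)_\Omega$, whereas the scheme \eqref{eq:IE_FEM_2} (which you quote correctly) pairs $\p_t\Uht$ with $\nabla\uht$, not with $\nabla\Uht$. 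Consequently the main piece does \emph{not} reduce to the data terms alone: after integration by parts it equals $(u_{h,\tau,0},\varphi_h(0))_\Omega+\int_0^T(f_{h,\tau},\varphi_h)_\Omega\,\mathrm{d}t-\int_0^T(\nabla\varphi_h,\nabla(\uht-\Uht))_\Omega\,\mathrm{d}t$, and it is precisely this last correction that turns your leftover $+\frac12\int_0^T(\nabla\varphi_h,\nabla(\uht-\Uht))_\Omega\,\mathrm{d}t$ into the required $-\frac12\int_0^T(\nabla\varphi_h,\nabla(\uht-\Uht))_\Omega\,\mathrm{d}t$ inside $\Bz(\overline{\bm{u}}_{h,\tau},\varphi_h)$. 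Taken at face value, your assembly therefore produces \eqref{eq:jump_discrete_energy_stab_3} with the left-hand side $\frac12\int_0^T\left[(\p_t\varphi_h,\uht-\Uht)_\Omega-(\nabla\varphi_h,\nabla(\uht-\Uht))_\Omega\right]\mathrm{d}t$, i.e.\ the wrong sign on the gradient term.

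The repair is exactly the paper's add--subtract step: use different decompositions in the two terms, namely $\ouht=\Uht+\frac12(\uht-\Uht)$ in the time-derivative term and $\ouht=\uht-\frac12(\uht-\Uht)$ in the gradient term, so that the main piece is the mixed combination $(\ouht(T),\varphi_h(T))_\Omega+\int_0^T\left[-\pair{\p_t\varphi_h}{\Uht}+(\nabla\uht,\nabla\varphi_h)_\Omega\right]\mathrm{d}t$, which is what \eqref{eq:jump_discrete_energy_stab_4} (the analogue of \eqref{eq:energy_main_2} without the flux) identifies with the data terms, and both leftovers then carry the factor $-\frac12$. With that single adjustment your argument closes and coincides with the paper's proof.
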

\begin{proof}
Note that $H^1(0,T;\VTp)\subset Y$ since $\VTp\subset H^1_0(\Omega)$.
Using the identity $\ouht(t_n)=\Uht(t_n)=u_{h,\tau,n}$ for all $n\in \{0,\dots,N\}$ and using~\eqref{eq:IE_FEM_2} (which holds in a pointwise sense in time), we find that 
\begin{equation}\label{eq:jump_discrete_energy_stab_4}
(\ouht(T),\varphi_h(T))_\Omega +\int_{0}^{T} \left[-\pair{\p_t \varphi_h}{\Uht}+(\nabla \uht,\nabla \varphi_h)_{\Omega}\right]\mathrm{d}t = 
(u_{h,\tau,0},\varphi(0))_\Omega  + \int_{0}^{T} (f_{h,\tau},\varphi_h)_\Omega\mathrm{d}t,
\end{equation}
where we have used integration-by-parts on the temporal derivative term.
After addition-subtraction of the terms in~\eqref{eq:jump_discrete_energy_stab_4} above, we find that
\begin{multline*}
\Bz(\overline{\bm{u}}_{h,\tau},\varphi_h)
 = (\ouht(T),\varphi_h(T))_{\Omega}+\int_{0}^{T} \left[-(\p_t \varphi_h,\ouht)_{\Omega}+(\nabla \ouht, \varphi_h)_\Omega\right]\mathrm{d}t
\\ 
=(u_{h,\tau,0},\varphi_h(0))_\Omega + \int_{0}^{T}\left[ (f_{h,\tau},\varphi_h)_\Omega + ( \p_t \varphi_h, \Uht - \ouht )_{\Omega} + (\nabla \varphi_h, \nabla (\ouht - \uht) )_{\Omega} \right] \mathrm{d}t
\\ 
=(u_{h,\tau,0},\varphi_h(0))_\Omega + \int_{0}^{T}(f_{h,\tau},\varphi_h)_\Omega \mathrm{d}t - \frac{1}{2}\int_{0}^{T} \left[(\p_t\varphi_h,\uht-\Uht)_{\Omega}+(\nabla \varphi_h,\nabla (\uht-\Uht))_{\Omega} \right]\mathrm{d}t,
\end{multline*}
where we have simplified $\ouht-\Uht = \uht-\ouht = \frac{1}{2}\left(\uht-\Uht\right)$.
Recalling \eqref{eq:Z_weakform_2}, after subtracting the identity above from $\Bz(\bm{u},\varphi_h)$, we find that
\begin{multline}
\Bz(\bm{u}-\overline{\bm{u}}_{h,\tau},\varphi_h)
 = ( u_0-\ouht(0),\varphi_h(0))_\Omega +
\int_{0}^{T}\pair{f-f_{h,\tau}}{\varphi_h}\mathrm{d}t 
\\ + \frac{1}{2}\int_{0}^{T} \left[(\p_t\varphi_h,\uht-\Uht)_{\Omega}+(\nabla \varphi_h,\nabla (\uht-\Uht))_{\Omega} \right]\mathrm{d}t,
\end{multline}
which is equivalent to~\eqref{eq:jump_discrete_energy_stab_3}.
\end{proof}

We now show that the temporal jump estimator is bounded by the energy norm of the error.

\begin{lemma}[Efficiency of the jump estimator]\label{lem:jump_discrete_energy_stab}
We have the bound
\begin{equation}\label{eq:jump_discrete_energy_stab}
\norm{\uht-\Uht}_X \lesssim \norm{u-\overline{u}_{h,\tau}}_{E} + \wetaOscEth,
\end{equation}
where the hidden constant depends only on $C_{\Pi}$, and where~$\wetaOscEth$ is defined in~\eqref{eq:data_osc_2}.
\end{lemma}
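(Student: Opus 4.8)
The plan is to apply the identity of Lemma~\ref{lem:main_identity} with a test function $\varphi_h$ engineered so that the left-hand side of~\eqref{eq:jump_discrete_energy_stab_3} reproduces $\tfrac12\norm{\uht-\Uht}_X^2$, while $\normYs{\varphi_h}$ stays controlled by $\norm{\uht-\Uht}_X$. Once such a $\varphi_h\in H^1(0,T;\VTp)$ with $\varphi_h(0)=0$ is available, the term $(u_0-u_{h,\tau,0},\varphi_h(0))_\Omega$ in~\eqref{eq:jump_discrete_energy_stab_3} drops; the definition of $\wetaOscEth$ in~\eqref{eq:data_osc_2} gives $\abs{\int_0^T\pair{f-f_{h,\tau}}{\varphi_h}\dd t}\leq \wetaOscEth\,\normYs{\varphi_h}$ (applying the supremum to both $\varphi_h$ and $-\varphi_h$); and the upper bound~\eqref{eq:infsup_upper_bound} gives $\abs{\Bz(\bm{u}-\overline{\bm{u}}_{h,\tau},\varphi_h)}\leq \norm{\bm{u}-\overline{\bm{u}}_{h,\tau}}_{\bm{Z}}\,\normYs{\varphi_h}=\norm{u-\ouht}_E\,\normYs{\varphi_h}$, where the last equality uses $\bm{u}-\overline{\bm{u}}_{h,\tau}=(u-\ouht,(u-\ouht)(T))$, so that $\norm{\bm{u}-\overline{\bm{u}}_{h,\tau}}_{\bm{Z}}^2=\norm{u-\ouht}_X^2+\tfrac12\norm{(u-\ouht)(T)}_\Omega^2=\norm{u-\ouht}_E^2$. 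Hence the right-hand side of~\eqref{eq:jump_discrete_energy_stab_3} is at most $(\norm{u-\ouht}_E+\wetaOscEth)\normYs{\varphi_h}$ for any admissible $\varphi_h$.

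The crucial choice is to take $\varphi_h$ to be the \emph{discrete parabolic lifting} of $\uht-\Uht$: the unique $\varphi_h\in H^1(0,T;\VTp)$ with $\varphi_h(0)=0$ such that
\[
(\partial_t\varphi_h(t),v_h)_\Omega+(\nabla\varphi_h(t),\nabla v_h)_\Omega=(\nabla(\uht-\Uht)(t),\nabla v_h)_\Omega\qquad \forall v_h\in\VTp,\ \text{a.e. }t\in(0,T).
\]
This is a linear ODE system in the finite-dimensional space $\VTp$ with right-hand side in $L^\infty(0,T;\VTp)$ --- note that $(\uht-\Uht)(t)\in\VTp$ for every $t$, since both $\uht(t)$ and $\Uht(t)$ lie in $\VTp$ --- so $\varphi_h$ is well-defined and belongs to $H^1(0,T;\VTp)\cap C([0,T];\VTp)\subset Y$. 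Choosing $v_h=(\uht-\Uht)(t)$ in the defining identity shows that the integrand on the left of~\eqref{eq:jump_discrete_energy_stab_3} equals $\tfrac12\norm{\nabla(\uht-\Uht)(t)}_\Omega^2$, so that left-hand side equals $\tfrac12\norm{\uht-\Uht}_X^2$.

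It then remains only to bound $\normYs{\varphi_h}$. Testing the defining identity with $v_h=\varphi_h(t)$, integrating in time, and using $\varphi_h(0)=0$ together with Young's inequality yields $\norm{\varphi_h(T)}_\Omega^2+\int_0^T\norm{\nabla\varphi_h}_\Omega^2\dd t\leq\norm{\uht-\Uht}_X^2$. Moreover the defining identity gives $\norm{\partial_t\varphi_h(t)}_{\VTp^*}\leq\norm{\nabla(\uht-\Uht-\varphi_h)(t)}_\Omega$ in the discrete dual norm of~\eqref{eq:DiscreteNegnorm_stab}, whence $\int_0^T\norm{\partial_t\varphi_h}_{\VTp^*}^2\dd t\leq 2\norm{\uht-\Uht}_X^2+2\int_0^T\norm{\nabla\varphi_h}_\Omega^2\dd t\leq 4\norm{\uht-\Uht}_X^2$; then~\eqref{eq:DiscreteNegnorm_stab} upgrades this to $\int_0^T\norm{\partial_t\varphi_h}_{H^{-1}(\Omega)}^2\dd t\leq 4C_\Pi^2\norm{\uht-\Uht}_X^2$. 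Inserting these estimates and $\varphi_h(0)=0$ into~\eqref{eq:Ys_def} gives $\normYs{\varphi_h}^2\leq(4C_\Pi^2+2)\norm{\uht-\Uht}_X^2$, and combining with the first step, $\tfrac12\norm{\uht-\Uht}_X^2\leq(4C_\Pi^2+2)^{1/2}(\norm{u-\ouht}_E+\wetaOscEth)\norm{\uht-\Uht}_X$, which after dividing by $\norm{\uht-\Uht}_X$ yields~\eqref{eq:jump_discrete_energy_stab} with a constant depending only on $C_\Pi$ (the case $\uht=\Uht$ being trivial).

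I expect the main obstacle to be precisely the control of the $\int_0^T\norm{\partial_t\varphi_h}_{H^{-1}(\Omega)}^2\dd t$ contribution to $\normYs{\varphi_h}$: the two most natural candidates for $\varphi_h$ --- a bubble-in-time multiple of the jump $u_{h,\tau,n}-u_{h,\tau,n-1}$ on each interval, and the affine reconstruction $\Uht$ itself --- both fail here, because for them $\partial_t\varphi_h$ is essentially proportional to the discrete residual $\partial_t\Uht$, which is not controlled by $\norm{\nabla(\uht-\Uht)}_\Omega$ in general. The discrete parabolic lifting avoids this by construction, since its time derivative is tied directly to $\nabla(\uht-\Uht)$ through the defining equation, and the $H^1$-stability of $\Pi_h$, in the form~\eqref{eq:DiscreteNegnorm_stab}, is exactly what converts the resulting discrete dual-norm bound into the required $H^{-1}(\Omega)$ bound.
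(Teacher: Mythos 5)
Your proof is correct and follows essentially the same route as the paper: the same discrete parabolic lifting $\varphi_h$ of $\uht-\Uht$ with $\varphi_h(0)=0$, combined with Lemma~\ref{lem:main_identity}, the boundedness of $\Bz$, and the $H^1$-stability bound~\eqref{eq:DiscreteNegnorm_stab}. The only (immaterial) difference is in how $\normYs{\varphi_h}\lesssim\norm{\uht-\Uht}_X$ is obtained: you use a standard energy estimate plus a direct discrete dual-norm bound, whereas the paper derives an exact identity for $\int_0^T\bigl(\norm{\partial_t\varphi_h}_{\VTp^*}^2+\norm{\nabla\varphi_h}_\Omega^2\bigr)\,\mathrm{d}t+\norm{\varphi_h(T)}_\Omega^2$ via an auxiliary discrete lifting of $\partial_t\varphi_h$; both give a constant depending only on $C_\Pi$.
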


\begin{proof}
Since $\VTp$ is finite dimensional, we can find a unique $\varphi_h \in H^1(0,T;\VTp)\cap C([0,T];\VTp)$ that satisfies $\varphi_h(0)=0$ and, for a.e.\ $t\in (0,T)$,
\begin{equation}\label{eq:dual_problem_definition}
\begin{aligned}
(\p_t\varphi_h(t),v_h)_\Omega + (\nabla \varphi_h(t),\nabla v_h)_{\Omega} &= (\nabla (\uht-\Uht)(t),\nabla v_h)_{\Omega} && \forall v_h \in V_h.
\end{aligned}
\end{equation}
Note that $\varphi\in Y$ since $\VTp\subset H^1_0(\Omega)$.
Furthermore, we can define $z_h\in L^2(0,T;\VTp)$ as the unique solution of $(\nabla z_h(t),\nabla v_h)_\Omega=(\p_t \varphi_h(t),\nabla v_h)_\Omega$ for all $v_h\in \VTp$. Then, we have the identities $(\nabla(z_h(t)+\varphi_h(t)),\nabla v_h)_\Omega=(\nabla(\uht-\Uht))(t),\nabla v_h)_{\Omega}$ for a.e.\ $t\in(0,T)$, which implies $z_h+\varphi_h=\uht-\Uht$ in $\Omega$, for a.e.\ $t\in (0,T)$.
Therefore, expanding the square, we obtain
\begin{equation}\label{eq:jump_discrete_energy_stab_1}
\begin{aligned}
\int_0^T\norm{\nabla(\uht-\Uht)}_{\Omega}^2\mathrm{d}t & = 
 \int_0^T \left[(\p_t \varphi_h ,\uht-\Uht)_\Omega+(\nabla \varphi_h,\nabla(\uht-\Uht))_\Omega  \right]\mathrm{d}t
\\ & = \int_0^T \norm{\nabla(z_h+\varphi_h)}_{\Omega}^2\mathrm{d}t 
\\ & = \int_0^T \left(\norm{\nabla z_h}_\Omega^2 + \norm{\nabla\varphi_h}_{\Omega}^2\right)\mathrm{d}t + 2\int_0^T (\nabla z_h,\nabla \varphi_h)_\Omega \mathrm{d}t
\\ & = \int_0^T \left(\norm{\partial_t \varphi_h}_{\VTp^*}^2+\norm{\nabla \varphi_h}_\Omega^2\right)\mathrm{d}t + 2\int_\Omega (\p_t \varphi_h,\varphi_h)_\Omega\mathrm{d}t
\\ & =\int_0^T \left(\norm{\partial_t \varphi_h}_{\VTp^*}^2+\norm{\nabla \varphi_h}_\Omega^2\right)\mathrm{d}t + \norm{\varphi_h(T)}_{\Omega}^2,
\end{aligned}
\end{equation}
where, in passing to the last line above, we have used the fact that $\varphi_h(0)=0$. It then follows from~\eqref{eq:DiscreteNegnorm_stab} that $\norm{\p_t \varphi_h}_{H^{-1}(\Omega)}\leq C_\Pi \norm{\p_t \varphi_h}_{\VTp^*}$ and thus
\begin{equation}\label{eq:jump_discrete_energy_stab_2}
\normYs{\varphi_h}^2 = \int_0^T \left(\norm{\partial_t \varphi_h}_{H^{-1}(\Omega)}^2+\norm{\nabla \varphi_h}_\Omega^2\right)\mathrm{d}t + \norm{\varphi_h(T)}_{\Omega}^2 \lesssim \norm{\uht-\Uht}_X^2,
\end{equation}
for some constant depending only on $C_{\Pi}$.
Combining the identity~\eqref{eq:jump_discrete_energy_stab_3} of Lemma~\ref{lem:main_identity} (noting that $\varphi_h(0)=0$) along with the first line of \eqref{eq:jump_discrete_energy_stab_1} above, it is found that
\begin{equation}
  \frac{1}{2}\norm{\uht-\Uht}_X^2  = \Bz(\bm{u}-\overline{\bm{u}}_{h,\tau},\varphi_h) - \int_{0}^{T} \pair{f-f_{h,\tau}}{\varphi_h}\mathrm{d}t.
\end{equation}
The triangle inequality and Theorem~\ref{thm:Z_infsup} then imply that
\begin{equation}
\norm{\uht-\Uht}_X^2 \leq \left(\norm{u-\uht}_E+\wetaOscEth \right)\normYs{\varphi_h}.
\end{equation}
Thus~\eqref{eq:jump_discrete_energy_stab} readily follows from the inequality above and from~\eqref{eq:jump_discrete_energy_stab_2}.
\end{proof}

In order to bound the equilibrated flux estimator in terms of the error, we will use here part of~\cite[Theorem~5.1]{ESV2019}, which showed that the flux estimator is efficient \emph{up to the temporal jump terms} with respect to the $X$-norm of the error $u-\uht$, under the condition $h^2\lesssim \tau$. We also restrict here ourselves to the case $1\leq \sdim\leq 3$ as this was also assumed in~\cite{ESV2019}.

\begin{lemma}[\cite{ESV2019}]\label{lem:X_norm_lower_bounds}
Suppose that $1\leq \sdim \leq 3 $, and suppose that there exists constant $\gamma>0$ such that $h_{\oma}^2\leq \gamma \tau_n$ for every $\ver\in\Ver$ and every $n\in\{1,\dots,N\}$.
Then, for each $K\in\T$ and each $n\in\{1,\dots,N\}$, we have
\begin{equation}\label{eq:X_norm_local_lower_bound}
\int_{I_n}\norm{\sht+\nabla\uht}_K^2\mathrm{d}t \lesssim \sum_{\ver\in\Ver_K} \left[\int_{I_n}\left(\norm{\nabla(u-\uht)}_{\oma}^2+\norm{\nabla(\uht-\Uht)}_{\oma}^2\right)\mathrm{d}t+\left[\eta_{\mathrm{osc}}^{\ver,n}\right]^2\right],
\end{equation}
and also
\begin{equation}\label{eq:X_norm_global_lower_bound}
\int_0^T\norm{\sht+\nabla \uht}_{\Omega}^2\mathrm{d}t \lesssim \norm{u-\uht}_X^2+\norm{\uht-\Uht}_X^2 + \sum_{n=1}^N\sum_{\ver\in\Ver}\left[\eta_{\mathrm{osc}}^{\ver,n}\right]^2,
\end{equation}
where the hidden constant depends only on the shape-regularity of $\T$, the dimension $\sdim$ and on $\gamma$, and where the data oscillation~$\eta_{\mathrm{osc}}^{\ver,n}$ is defined in~\eqref{eq:data_osc_2} above.
\end{lemma}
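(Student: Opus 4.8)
The plan is to reduce both bounds to~\cite[Theorem~5.1]{ESV2019}, of which they are the specialization to the present setting. The first step is to check that the equilibrated flux $\sht$ built above coincides with the flux of~\cite{ESV2019} in the relevant special case: specializing that construction to a single fixed simplicial mesh~$\T$ used at every time-step, a uniform polynomial degree~$p$ across all elements (so that $\Ver_K$ and the patchwise spaces $\Qa$, $\Va$ reduce to the ones defined here, with $\widetilde{p}\ge p+1$), and the lowest-order implicit Euler scheme in time (whose piecewise-constant reconstruction is exactly $\uht$), the local constrained minimization problems defining $\sht^{\ver,n}$ coincide up to notation with those of~\cite{ESV2019}, as do the global flux $\sht|_{I_n}=\sum_{\ver\in\Ver}\sht^{\ver,n}$ and the equilibration identity~\eqref{eq:sht_equilib}. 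The second step is to match the hypotheses: by shape-regularity $h_K\simeq h_\oma$ for every $K\subset\oma$, so the patchwise condition $h_\oma^2\le\gamma\tau_n$ is equivalent to the elementwise condition $h_K^2\lesssim\tau_n$ used in~\cite{ESV2019}, with constants depending only on $\theta_\T$ and $\sdim$; the restriction $\sdim\le 3$ is the one imposed there; and the oscillation $\eta_{\mathrm{osc}}^{\ver,n}$ of~\eqref{eq:data_osc_2} is exactly the patchwise $L^2(I_n;H^{-1}(\oma))$-seminorm of $f-f_{h,\tau}$ entering the efficiency analysis of~\cite{ESV2019} (no initial-data oscillation appears, since the flux term involves neither $\varphi(0)$ nor $\varphi(T)$).

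With this dictionary in place, the local estimate~\eqref{eq:X_norm_local_lower_bound} is read off from the local efficiency bound of~\cite[Theorem~5.1]{ESV2019}: on each patch the minimizer $\sht^{\ver,n}$ is controlled, via the stability of the associated local mixed finite element system together with the condition $h_\oma^2\lesssim\tau_n$ --- which is what makes it possible to absorb the contribution of the term $\psia\,\p_t\Uht$ (equal, pointwise in time, to $\psia\,(u_{h,\tau,n}-u_{h,\tau,n-1})/\tau_n$) into a suitably scaled $H^{-1}(\oma)$-type quantity --- by the residual of the scheme restricted to $\oma\times I_n$, and that residual is in turn bounded by $\norm{\nabla(u-\uht)}_{\oma}$, the local jump $\norm{\nabla(\uht-\Uht)}_{\oma}$, and $\eta_{\mathrm{osc}}^{\ver,n}$. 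The global estimate~\eqref{eq:X_norm_global_lower_bound} then follows by summing~\eqref{eq:X_norm_local_lower_bound} over $K\in\T$ and $n\in\{1,\dots,N\}$ and using the bounded overlap of the vertex patches $\{\oma\}_{\ver\in\Ver}$, whose multiplicity depends only on the shape-regularity of~$\T$ and on~$\sdim$.

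The part that requires genuine work --- and which is carried out in~\cite{ESV2019} rather than here --- is the verification that the one-sided condition $h^2\lesssim\tau$ is strong enough to dominate the discrete time-derivative term in the local residual; within the present paper the task reduces to confirming carefully that the simplified flux construction given above is a faithful special case of the one in~\cite{ESV2019}, so that the efficiency machinery transfers verbatim. I therefore expect the bookkeeping of this specialization, rather than any new estimate, to be the main point to get right.
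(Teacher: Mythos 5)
Your proposal is correct and follows the same route as the paper, which states this lemma without proof as a direct specialization of \cite[Theorem~5.1]{ESV2019} to the present setting (fixed mesh, uniform degree, lowest-order time discretization). Your additional bookkeeping — matching the flux construction, the mesh/time-step condition, and the oscillation terms, then summing the local bound over elements and time-steps using the bounded overlap of vertex patches — is exactly the verification implicitly relied upon by the citation.
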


\paragraph{Proof of Theorem~\ref{thm:lower_bound}.}
To prove~\eqref{eq:energy_main_lower}, we start by noting that, by the triangle inequality, 
\begin{equation}
\int_{0}^T \norm{\sht+\nabla\ouht}_{\Omega}^2\mathrm{d}t \lesssim \int_{0}^T \left(  \norm{\sht+\nabla \uht}_\Omega^2+ \norm{\nabla(\uht-\Uht)}_{\Omega}^2 \right) \mathrm{d}t,
\end{equation}
where we also used $\ouht-\Uht = \frac{1}{2}(\uht-\Uht)$.
Applying Lemma~\ref{lem:X_norm_lower_bounds}, in particular~\eqref{eq:X_norm_global_lower_bound}, we then find that
\begin{equation}
\begin{split}
\int_0^T \norm{\sht+\nabla \ouht}^2_\Omega\mathrm{d}t & \lesssim \norm{u-\uht}^2_{X} + \norm{\uht-\Uht}_X^2 + \sum_{n=1}^N\sum_{\ver\in\Ver}\left[\eta_{\mathrm{osc}}^{\ver,n}\right]^2
\\ & \lesssim \norm{u-\ouht}_X^2 + \norm{\uht-\Uht}_X^2 + \sum_{n=1}^N\sum_{\ver\in\Ver}\left[\eta_{\mathrm{osc}}^{\ver,n}\right]^2,
\end{split}
\end{equation}
where we have used again the triangle inequality and the identity $\ouht-\uht = \frac{1}{2}(\Uht-\uht)$ in passing to the second-line above.
We then use Lemma~\ref{lem:jump_discrete_energy_stab} above to bound $\norm{\uht-\Uht}_X$, which gives
\begin{equation}
\int_0^T \norm{\sht+\nabla \ouht}^2_\Omega \mathrm{d}t+\norm{\uht-\Uht}_X^2 \lesssim 
\norm{u-\ouht}_{E}^2 + [\wetaOscEth]^2 + \sum_{n=1}^N\sum_{\ver\in\Ver}\left[\eta_{\mathrm{osc}}^{\ver,n}\right]^2,
\end{equation}
which shows~\eqref{eq:energy_main_lower}.\hfill\proofbox 

\begin{remark}[Connection to the semi-discrete setting]
To make a connection to the discussion in the introduction regarding the semi-discrete setting, it is possible to give an alternative proof of the hypercircle theorem~\eqref{eq:hypercircle_2} using entirely analogous arguments as those above. In particular, in the semi-discrete setting with vanishing data oscillation, it is found that
\begin{equation}
\Bz(\bm{u}-\bm{\overline{u}}_{\tau},\varphi) = \frac{1}{2}\int_0^T \left( \pair{\p_t \varphi}{u_\tau-U_\tau}+(\nabla (u_\tau-U_\tau),\nabla \varphi)_\Omega \right)\mathrm{d}t\quad \forall \varphi \in Y,
\end{equation}
from which \eqref{eq:hypercircle_2} follows immediately in view of Theorem~\ref{thm:Z_infsup} and the invariance of $\normYs{\cdot}$ with respect to reversal of the time direction.
\end{remark}

\appendix 

\section{An example}\label{sec:counterexamples}

We give a brief example to illustrate how the temporal jump estimator is not generally efficient with respect to either $u-\uht$ or $u-\Uht$ in the energy norm setting.
Indeed, to see this, it is enough to consider the case of an ordinary differential equation: find $u\colon [0,1]\rightarrow \R$ such that 
$$
\p_t u + \lambda u =1 \quad \text{in }(0,1], \quad u(0)=0,
$$
where $\lambda>0$ is a parameter.
This problem can be thought of as arising from some parabolic PDE after a transformation to Fourier modes, and suitable scaling of the parameters.
Note that in this case the source term~$f=1$ is constant, so there is no data oscillation.
The exact solution is $u(t)=\frac{1-\mathrm{e}^{-\lambda t}}{\lambda}$ for all $t\in [0,1]$. 
Consider its discretization by the implicit Euler method on a single time-step of length $\tau_1=1$.
Thus we find that $u_\tau(t) = \frac{1}{1+\lambda}$ for all $t\in(0,1]$ and $U_\tau(t) = \frac{t}{1+\lambda}$ for all $t\in [0,1]$. 
Then, direct computations show that
\begin{equation}
\lim_{\lambda\tends 0} \frac{\norm{u_\tau-U_\tau}_E}{\norm{u-U_\tau}_E} = \infty,
\end{equation}
so the estimator is not efficient with respect to the energy norm of the error $u-U_\tau$ when the parameter $\lambda$ is small. 
It is also found that
\begin{equation}
\lim_{\lambda\tends \infty}\frac{\norm{u_\tau-U_\tau}_E}{\norm{u-u_\tau}_E} = \infty,
\end{equation}
so the estimator is not efficient with respect to the energy norm of the error $u-u_\tau$ when the parameter $\lambda$ is large.
More generally, the example above shows that we cannot generally expect to show the efficiency of the estimator with regards to the energy norm error for either of these reconstructions, independently of discretization and problem parameters.
In particular, the loss of efficiency occurs in cases where one reconstruction is significantly closer to the true solution than the other. 
Incidentally, this also shows that there is no general answer to which reconstruction is the more accurate one.

\bibliographystyle{siamplain_noURL}
\bibliography{newbiblio}

\end{document}